\documentclass[11pt]{amsart}
\usepackage{url,enumitem, amsopn, amsmath}
\usepackage{xcolor}
\usepackage{cleveref}
\usepackage{tikz}
\usepackage{wrapfig}
\usepackage{lipsum}
\usepackage{pgfplots}
\usepackage{filehook}
\AtEndOfIncludes{%
  \global\let\savedclearpage\clearpage
  \global\let\clearpage\relax
}
\AfterIncludes{%
  \global\let\clearpage\savedclearpage
}
\pgfdeclarelayer{nodelayer}
\pgfdeclarelayer{edgelayer}
\pgfsetlayers{nodelayer,main,edgelayer}

\newcommand{\cal}{\mathcal}

\newcommand{\reals}{\mbox{$\mathbb R$}}

\newcommand{\nats}{\mbox{$\mathbb N$}}
\newcommand{\ints}{\mbox{$\mathbb Z$}}

\newcommand{\floor}[1]{\lfloor #1 \rfloor}

\newcommand{\odim}{{\mathop{\mathrm{odim}}\nolimits}}

\newcommand{\inc}{{\mathop{\mathrm{inc}}\nolimits}}
\newcommand{\crit}{{\mathop{\mathrm{crit}}\nolimits}}
\newcommand{\extr}{{\mathop{\mathrm{extr}}\nolimits}}
\newcommand{\tr}{{\mathop{\mathrm{tr}}\nolimits}}

\newcommand{\Pos}{{\mathop{\mathbf{P}}\nolimits}}
\newcommand{\chain}{{\mathop{\mathbf{C}}\nolimits}}
\newcommand{\Turan}{{\mathop{\mathbf{T}}\nolimits}}

\newcommand{\comment}[1]{}

\usepackage{comment}

\usepackage{amsmath, amsfonts, amssymb, latexsym, stmaryrd}
\usepackage{graphicx}                   

\def\squarebox#1{\hbox to #1{\hfill\vbox to #1{\vfill}}}
\def\qed{\hspace*{\fill}
        \vbox{\hrule\hbox{\vrule\squarebox{.667em}\vrule}\hrule}\smallskip}

\theoremstyle{plain}
\newtheorem{lemma}{Lemma}[section]
\newtheorem{theorem}[lemma]{Theorem}
\newtheorem{corollary}[lemma]{Corollary}
\newtheorem{proposition}[lemma]{Proposition}

\theoremstyle{definition}
\newtheorem{claim}[lemma]{Claim}
\newtheorem{observation}[lemma]{Observation}
\newtheorem{definition}[lemma]{Definition}

\newtheorem{example}[lemma]{Example}

\newtheorem*{rmk*}{Remark}
\newtheorem*{rmks*}{Remarks}
\newtheorem*{conventions*}{Conventions}
\newtheorem*{convention*}{Convention}

\def\squareforqed{\hbox{\rlap{$\sqcap$}$\sqcup$}}
\def\qed{\ifmmode\squareforqed\else{\unskip\nobreak\hfil
\penalty50\hskip1em\null\nobreak\hfil\squareforqed
\parfillskip=0pt\finalhyphendemerits=0\endgraf}\fi}

\newlength{\tablength}
\newlength{\spacelength}
\newcommand{\tabstar}{\hspace*{\tablength}}
\newcommand{\spacestar}{\hspace*{\spacelength}}
\def\obeytabs{\catcode`\^^I=\active}
{\obeytabs\global\let^^I=\tabstar}
{\obeyspaces\global\let =\spacestar}
\newenvironment{display}{\begingroup\obeylines\obeyspaces\obeytabs}{\endgroup}
\newenvironment{prog}{\begin{display}\parskip0pt\sf}{\end{display}}

\author{Geir Agnarsson}
\address{Department of Mathematical Sciences \\ 
George Mason University \\ Fairfax, VA  22030}
\email{geir@math.gmu.edu}

\author{John B.~Kent}
\address{Department of Mathematical Sciences \\ 
George Mason University \\ Fairfax, VA  22030}
\email{johnbkentphd@gmail.com}

\title{Tur\'{a}n results for posets and their alternating cycles}

\subjclass[MSC2020]{06A07, 05C30, 68R05}

\keywords{Poset, alternating cycle, strict alternating cycle,
  hypergraph, chromatic number}

\date{\today}
\begin{document}
\begin{abstract}
   For a partially ordered set ${\Pos} = (X,\leq)$ there exist
  hypergraphs where the vertices are the set of ordered tuples of
  either all incomparable elements of ${\Pos}$ or all the critical
  pairs of ${\Pos}$, and the edges are formed by the duals of either all
  the alternating cycles of ${\Pos}$ or all the strict alternating
  cycles of ${\Pos}$. The weak chromatic numbers of these hypergraphs are all equal to the order dimension of $\Pos$. Here are established upper bounds on the number of strict alternating cycles a poset $\Pos=(X,\leq)$
 can have in terms of $n = |X|$, the cardinality of the groundset of ${\Pos}$, and the width $w$ of $\Pos$. These bounds also apply to the number of hyperedges of the associated
  hypergraph $\mathcal{H}^s(\Pos)$, with incomparable pairs as vertices and strict alternating cycles dual to its hyperedges.  
\end{abstract}
\maketitle  


\section{Introduction and Motivation}
\label{sec:intro}

Alternating cycles have been employed in a variety of ways in the study of posets over the last several decades. First defined in \cite{TrotterMoore77}, the fundamental link between alternating cycles and order dimension has been used to establish many results bounding order dimension. In the inciting work, it was shown using alternating cycles that introduction of maximum or minimum elements restricts order dimension of planar posets to at most $3$. The wonderful textbook \cite{Trotter}, from which many definitions and notations in the first two sections are derived, leverages these structures to show a wide variety of enumerative results for strict alternating cycles. As an example, alternating cycles in posets have been used to show the following theorem from~\cite{trotter1976some}:
\begin{theorem} 
    Let $G=(V,E)$ be a connected graph on two or more vertices. Let $\Pos=(X,\leq)$ be a poset where $X$  is the set of all connected induced subraphs of $G$ and $H_1\leq H_2$ in $\Pos$ iff $H_1$ is contained in $H_2$. Then the dimension of $\Pos$ is the number of noncut vertices in $G$.
\end{theorem}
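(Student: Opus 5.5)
The plan is to prove the two inequalities separately. The lower bound will come from a standard crown of incomparable pairs. The upper bound will come from one linear extension per noncut vertex, built from graph distance. Write $N$ for the set of noncut vertices of $G$ and $t=|N|$. First I record $t\ge 2$: the two ends of a longest path in $G$ are noncut vertices. If the empty subgraph is counted as an element of $X$, it lies below everything and changes nothing below.

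\emph{Lower bound.} For $v\in N$ set $a_v=\{v\}$ and $b_v=G-v$; both are connected induced subgraphs because $v$ is noncut. Clearly $a_v\not\le b_v$. Also $b_v\not\le a_v$, since $b_v$ has at least one vertex other than $v$. For $u\neq v$ in $N$ we have $a_u\le b_v$. Suppose some linear extension $L$ put both $b_u<a_u$ and $b_v<a_v$. Then
\[ b_u<a_u\le b_v<a_v\le b_u \]
in $L$, a contradiction. So each linear extension reverses at most one of the $t$ pairs $(a_v,b_v)$, and any realizer has at least $t$ members. Hence $\dim\Pos\ge t$.

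\emph{Upper bound.} For $v\in N$ and a nonempty $H\in X$ let $d_v(H)=\min_{y\in H}\operatorname{dist}_G(v,y)$. If $H\subseteq H'$ then $d_v(H)\ge d_v(H')$. So listing elements by decreasing $d_v$, and breaking ties by a fixed linear extension of inclusion, gives a linear extension $L_v$ of $\Pos$. I claim $\{L_v: v\in N\}$ is a realizer. Let $H_1\not\subseteq H_2$; I need some $v\in N$ with $H_2<H_1$ in $L_v$. It suffices to find $v\in N$ with $d_v(H_2)>d_v(H_1)$.

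Pick $x\in H_1\setminus H_2$.
\begin{itemize}
\item If $x\in N$, take $v=x$: then $d_v(H_1)=0<d_v(H_2)$.
\item If $x$ is a cut vertex, the connected graph $H_2$ avoids $x$, so it lies in one component of $G-x$. Choose a different component $D$. Every path from any vertex of $D$ to $H_2$ passes through $x$. So if $v\in D$, then $d_v(H_2)>\operatorname{dist}(v,x)\ge d_v(H_1)$.
\end{itemize}

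It remains to find a noncut vertex of $G$ inside $D$. This is the step I expect to need care. I would take $v\in D$ maximizing $\operatorname{dist}_G(v,x)$ and show that $G-v$ is connected.
\begin{itemize}
\item Every vertex outside $D$ reaches $x$ without entering $D$, hence without using $v$.
\item Take $y\in D$ with $y\ne v$, and a shortest $y$--$x$ path. Since $\operatorname{dist}(y,x)\le\operatorname{dist}(v,x)$, this path cannot pass through $v$. If it did, $v$ would be strictly closer to $x$ than $y$, or $v=y$.
\end{itemize}
So every vertex of $G-v$ is joined to $x$ in $G-v$, and $v\in N$. This gives $\dim\Pos\le t$, and with the lower bound the dimension equals the number of noncut vertices.
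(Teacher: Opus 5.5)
Your proof is correct and complete, including the farthest-vertex argument that produces a noncut vertex inside the component $D$. The paper gives no proof of this statement to compare against. It quotes the result from Trotter--Moore in the introduction as motivation, remarking only that alternating cycles were used to obtain it.

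\emph{Lower bound.} Your lower bound is the standard example $S_t$ built from the pairs $(\{v\},G-v)$ with $v$ noncut. In the paper's language, any two of these pairs form the dual of a strict alternating cycle of length two. So they span a copy of $K_t$ inside $\mathcal{H}^s_\Pos$, which forces $\chi(\mathcal{H}^s_\Pos)\ge t$.

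\emph{Upper bound.} An alternating-cycle argument would go as follows:
\begin{itemize}
\item assign each incomparable pair to a noncut vertex;
\item show that the reversals in each class contain no alternating cycle;
\item invoke Theorem~\ref{thm:cyclefree} and Lemma~\ref{lemma:extend} to get one linear extension per class.
\end{itemize}
Your function $d_v$ is exactly the certificate such an argument needs. It is weakly decreasing along inclusions and strictly decreasing along every reversal you assign to $v$. Hence no cycle in $\leq$ together with those reversals can exist.

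You skip that detour by turning $d_v$, with a fixed tie-breaking extension, directly into $L_v$. This is shorter and yields an explicit realizer. The cycle-based formulation is what ties the result to the hypergraph-coloring viewpoint the paper emphasizes. It is also more flexible when no global potential like $d_v$ is available.

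One small point is worth making explicit if the empty subgraph is admitted: adopt the convention $d_v(\emptyset)=+\infty$, or equivalently place $\emptyset$ at the bottom of each $L_v$.
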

More recently, alternating cycles have been used to show planar posets with height $h$ have dimension at most $192h+96$ \cite{joret2017planar}, and that planar posets of bounded height whose cover graphs  exclude a fixed topological minor have bounded dimension \cite{walczak2017minors}. A cursory glance at how alternating cycles can be generated will reveal very quickly that the number of alternating cycles possible on a poset is at least exponential in its cardinality. Specificity beyond this was heretofore unexplored, and so establishing tighter bounds is the goal of this work. 

\section{Setup, Definitions, and Some Classical Results}
Before commencing the further discussion, we state some conventions and basic definitions 
we will use throughout the article. Essentially all definitions will be consistent with \cite{Trotter}.
A \textit{partial order} $\leq$ is a binary relation on a set $X$ that is reflexive ($a\leq a$), antisymmetric ($a\leq b, ~~ b\leq a \implies a=b$), and transitive ($a\leq b, ~~b\leq c \implies a \leq c$) for all $a,b,c \in X$.
A \textit{partially ordered set}, or \textit{poset}, $\Pos=(X,\leq)$ is a set $X$ equipped with a partial order $\leq$. The \textit{order} of the poset $\Pos$ is $n=|X|$, the cardinality of the ground set $X$. Unless otherwise noted, from here on out all posets will be of finite order; $|X|=n \in \{1,2,\dots,\}:=\nats$. 
If $x\leq y$ or $y\leq x$, then $x$ and $y$ are called \textit{comparable}. If not, they are \textit{incomparable}, denoted $x||y$. 
For posets $\Pos=(X,\leq)$ and $\Pos'=(X',\leq')$, we say that $\Pos'$ is a \textit{subposet} of $\Pos$, denoted $\Pos' \subseteq \Pos$, if $X' \subseteq X$ and $\leq' \hspace{2pt}\subseteq\hspace{2pt}\leq$ (meaning $x \leq' y \implies x \leq y$ for all $x,y \in X'$).
If $\Pos'\subseteq \Pos$ is a subposet and $\leq'$ is the restriction of $\leq$ to $X'$, then $\Pos'$ is the subposet \textit{induced by $X'$} in $\Pos$, denoted $\Pos'=\Pos\left[X'\right]$. If $\Pos' \subseteq \Pos$ is a subposet and $X'=X$, such that $x \leq' y \implies x \leq y$ for all $x,y \in X$, then $\Pos$ is an \textit{extension} of $\Pos'$. A \textit{chain} $\chain$ of $\Pos$ is a subposet where every two elements are comparable. The \textit{length} of a chain is one less than the number of elements in
the groundset of $\chain$. A poset that is also a chain is a \textit{totally ordered set}, and in that case $\leq$ is a \textit{total order} or \textit{linear order}. An extension that is a total order is called a \textit{linear extension}.
An \textit{antichain} is a subposet where no elements are comparable. A poset that is itself 
an antichain will be called \textit{trivial}. 
The \textit{height} of a poset $\Pos$ is the number of elements of the longest chain. The \textit{width} of $\Pos$ is the number of elements of the largest antichain.
The set $\inc(\Pos)$ is the set of all ordered incomparable pairs $(x,y)$ where $x||y$ in $\Pos$.
For a poset $\Pos$, $(x,y) \in \inc(\Pos)$ is a \textit{critical pair} if for all $z,w \in \Pos$
we have
\begin{enumerate}[label=\roman*]
    \item $z\leq x \implies z \leq y$,
    \item $y \leq w \implies x \leq w$.
\end{enumerate} 
The set of all critical pairs of $\Pos$ is denoted $\crit(\Pos)$.
The \textit{dual} $A^d$ of a subset $A \subseteq X \times X=X^2$ is given by $A^d=\{(b,a):(a,b) \in A\}$. Clearly we have that $(A^d)^d=A$. 
For a binary relation $R$ on set $X$, the \textit{transitive closure} of $R$, denoted ${\tr}(R)$, is
given by 
$$\{(x,y) \in X\times X: \text{there exists a sequence}~~u_0,u_1,\dots,u_n~~ \text{with}~~(u_i,u_{i+1}) \in R, u_0=x,u_n=y\}.$$
Of particular interest of the previous definitions are the critical pairs. Intuitively, one can think of these as pairs that are ``almost'' comparable, in the sense that to the rest of the poset it may as well be the case that $x\leq y$. 
\begin{example}
Figure \ref{fig:critpairs} is a Hasse diagram of a poset.
\begin{figure}[h!]
    \centering
    \begin{tikzpicture}
        \draw (-1,-1) -- (-1,0);
        \draw (1,-1) -- (1,0);
        \draw (-1,-1) -- (1,0);
        \draw (-1,0) -- (-1,1);
        \draw (-1,0) -- (1,1);
        \draw (1,0) -- (1,1);
         \filldraw[black] (-5,0) circle (2pt) node[anchor=north]{$x_4$};
        \filldraw[black] (-3,0) circle (2pt) node[anchor=north]{$y_4$};
        \filldraw[black] (-1,-1) circle (2pt) node[anchor=north]{$x_1$};
        \filldraw[black] (1,-1) circle (2pt) node[anchor=north]{$y_1$};
        \filldraw[black] (-1,0) circle (2pt) node[anchor=east]{$x_2$};
        \filldraw[black] (1,0) circle (2pt) node[anchor=west]{$y_2$};
        \filldraw[black] (-1,1) circle (2pt) node[anchor=south]{$x_3$};
        \filldraw[black] (1,1) circle (2pt) node[anchor=south]{$y_3$};
    \end{tikzpicture}
    \caption{A poset with critical pairs}
    \label{fig:critpairs}
\end{figure}
The pair $(x_3,y_3)$ is critical because the first condition is satisfied and the second is vacuously true. The pair $(x_1,y_1)$ is critical because the first condition is vacuously true and the second condition is satisfied. The critical pair $(x_2,y_2)$ satisfies both conditions, and $(x_4,y_4)$ is trivially a critical pair. Observe that $(y_4,x_4)$ is also a critical pair, but $(y_1,x_1)$ is not. In general this relation is not symmetric, in contrast to incomparability. 
\end{example}
The connections to hypergraphs are a prominent part of what make the objects enumerated in this work notable, so some definitions there will also be needed. 

A \textit{simple graph} $G=(V,E)$ is an ordered pair where $V$ is a nonempty set of \textit{vertices} and $E$ is a nonempty set of 2-element subsets of $V$, the \textit{edges}. This differs from a general graph by its lack of loops (edges of cardinality 1) and its forbidding of multiple edges between the same vertices. Graphs used henceforth will be simple.   
A \textit{vertex coloring} of a graph $G$ is a function $c:V \to \nats$ such that $c(u) =c(v)$ implies $\{u,v\} \not\in E$.
The least $k\in \nats$ such that $G$ has a vertex coloring $c:V \to \nats$ with $|c(V)|=k$ is the \textit{chromatic number} of $G$, denoted $\chi(G)$. 
A \textit{simple hypergraph} $\mathcal{H}=(V,\mathcal{E})$ is a set of vertices $V$ and a set of \textit{hyperedges} $\mathcal{E}$, where each hyperedge $e \in \mathcal{E}$ is a subset of $V$ with at least 2 elements. Again this differs from general hypergraphs by excluding loops (single-element sets) and multiple edges on the same vertices. Hypergraphs henceforth will be simple.  
A \textit{weak vertex coloring} of a hypergraph is a function $c: V \to \nats$ such that for every hyperedge $e \in \mathcal{E}$, there exist vertices $u,v \in e$ such that $c(u) \neq c(v)$.  
When $|e|= 2$, then the notions of vertices sharing an edge are uniquely colored and vertices sharing an edge are not uniformly colored are equivalent. A vertex coloring of a hypergraph where each vertex in a hyperedge is assigned a unique color is called a \textit{strong} hypergraph coloring, as this is a more stringent property. If the hypergraph is a graph, both weak and strong hypergraph vertex coloring coincide with each other and the prior definition for graphs.
The \textit{weak chromatic number} $\chi(\mathcal{H})$ (respectively, \textit{strong chromatic number} $\chi_s(\mathcal{H}))$ of a hypergraph is the least positive integer $k$ such that $\mathcal{H}$ has a weak (strong) vertex coloring $c:V \to \reals$ with $|c(V)|=k$. The strong and weak hypergraph chromatic number also coincide with the graph chromatic number when the hypergraph is a graph. 

We now recall some useful properties of posets, most of which can be found in~\cite{Trotter}.
First, recall that a poset $\Pos$ is the intersection of all its linear extensions $\{L_i : i\in I\}$, 
so $\Pos = \bigcap_{i \in I}  L_i$, where $x \leq y$ in $\Pos$ iff $x \leq y$ in $L_i$ for all $i$. A family of linear extensions whose intersection as above is $\Pos$ is said to \textit{realize} $\Pos$, or be a \textit{realizer} of $\Pos$.  Such a realizer must satisfy a number of properties, but two of interest are in the following theorem:
\begin{theorem}
\label{thm:reverse}
Let $R$ be a family of linear extensions $L_i$ of $\Pos=(X,\leq)$. Then the following are equivalent:
\begin{itemize}
    \item $R$ is a realizer of $\Pos$.
    \item For each pair $(x,y) \in \inc(\Pos)$, $R$ contains both a linear extension $L_i$ with $x<y$ and a linear extension $L_j$ with $y<x$.
    \item For each critical pair $(x,y) \in \crit(\Pos)$, $R$ must contain a linear extension $L_i$ with $y<x$.
\end{itemize}
\end{theorem}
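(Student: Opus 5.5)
I will prove the cycle of implications (1)$\Rightarrow$(2)$\Rightarrow$(3)$\Rightarrow$(1). Here $R=\{L_i\}$ is a family of linear extensions of $\Pos$. Realizing $\Pos$ means $\bigcap_i L_i=\Pos$. Since every $L_i$ extends $\Pos$, the inclusion $\Pos\subseteq\bigcap_i L_i$ always holds. So the whole content is the reverse inclusion: every incomparable pair must be ``broken'' by some $L_i$.

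\emph{(1)$\Rightarrow$(2).} Let $(x,y)\in\inc(\Pos)$. If no $L_j$ had $y<x$, then $x<y$ in every $L_i$, so $x\le y$ in $\bigcap_i L_i=\Pos$, contradicting $x||y$. Hence some $L_j$ has $y<x$. Since $(y,x)\in\inc(\Pos)$ as well, the symmetric argument gives some $L_i$ with $x<y$.

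\emph{(2)$\Rightarrow$(3).} This is immediate because $\crit(\Pos)\subseteq\inc(\Pos)$.

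\emph{(3)$\Rightarrow$(1).} This is the main step. Suppose every critical pair is reversed in some $L_i$, but $\bigcap_i L_i\ne\Pos$. Then there is $(x,y)\in\inc(\Pos)$ with $x<y$ in every $L_i$. Among all such ``unreversed'' incomparable pairs I choose one that is extremal. Concretely, I minimize $|\{z: z\le x\}| + |\{w: y\le w\}|$, so that $x$ is as low as possible and $y$ as high as possible.

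I then claim this pair $(x,y)$ is critical, which contradicts (3).

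\begin{itemize}
\item For condition (i), suppose $z\le x$ but $z\not\le y$. Then $z<x$, and $y\not\le z$: otherwise $y\le z\le x$, contradicting $x||y$. So $z||y$. In every $L_i$ we have $z<x<y$, so $(z,y)$ is also unreversed. But $z$ has a strictly smaller down-set than $x$, contradicting minimality.
\item For condition (ii), suppose $y\le w$ but $x\not\le w$. Dually, $x||w$, and $x<y<w$ in every $L_i$, so $(x,w)$ is unreversed with a strictly smaller up-set than $y$. This again contradicts minimality.
\end{itemize}

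Thus $(x,y)$ is a critical pair that no $L_i$ reverses, contradicting (3). Therefore $\bigcap_i L_i=\Pos$.

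The only delicate point is choosing the extremal pair so that both conditions (i) and (ii) can be pushed simultaneously. Using the combined down-set/up-set size handles this, since any violation of either condition yields a pair with strictly smaller measure. Finiteness of $X$ guarantees the minimum exists.
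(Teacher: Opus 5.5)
Your argument is correct and is the standard one. The paper gives no proof of this theorem; it is quoted from Trotter's book \cite{Trotter}. Your choice of an unreversed incomparable pair minimizing $|\{z:z\le x\}|+|\{w:y\le w\}|$ is the minimal-counterexample form of the usual lemma: every $(x,y)\in\inc(\Pos)$ is dominated by a critical pair $(x',y')$ with $x'\le x$ and $y\le y'$. Consequently, any linear extension with $y'<x'$ also has $y\le y'<x'\le x$. Your combined measure handles conditions (i) and (ii) at once, instead of the usual two-stage choice (first $x'$ minimal, then $y'$ maximal).

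The one thing you must make explicit is which definition of critical pair you are using.
\begin{itemize}
\item In the first bullet you pass from ``$z\le x$ and $z\not\le y$'' to ``$z<x$''. In the second you treat $w$ as strictly above $y$. Both steps silently exclude $z=x$ and $w=y$.
\item Under the paper's literal wording, with non-strict $\le$ in (i) and (ii), $z=x$ violates (i) for every incomparable pair. So $\crit(\Pos)=\emptyset$, and (3)$\Rightarrow$(1) is false: a two-element antichain with a single linear extension is a counterexample.
\item What you have proved is the theorem for the standard definition, $z<x\Rightarrow z<y$ and $y<w\Rightarrow x<w$. This is evidently the intended one; it is what makes $(x_4,y_4)$ critical in the paper's own example.
\end{itemize}
So say that you use this definition. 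Then start the first bullet from $z<x$ with $z\not<y$; here $z\ne y$, since $z=y$ would give $y<x$, hence $z\not\le y$. Start the second bullet from $y<w$ with $x\not<w$, treated dually.

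A smaller point: when you extract from $\bigcap_i L_i\ne\Pos$ an incomparable pair with $x<y$ in every $L_i$, you are using $R\ne\emptyset$ to rule out $y<x$ in $\Pos$.
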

The minimum cardinality of a realizer of $\Pos$ is called the \textit{order dimension} of $\Pos$, denoted $\odim(\Pos)$. 
\begin{observation}
    \label{obs:reals}
    A poset $\Pos=(X,\leq)$ can be embedded into $(\reals^d,\leqslant)$ with its usual partial order, denoted $\Pos \hookrightarrow (\reals^d,\leqslant),$ if and only if $\Pos$ has order dimension at most $d$.
\end{observation}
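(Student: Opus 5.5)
We prove the two directions separately. Throughout, $(\reals^d,\leqslant)$ carries the product (coordinatewise) order: $u\leqslant v$ iff $u_k\le v_k$ for every $k$. By an embedding $\Pos\hookrightarrow(\reals^d,\leqslant)$ we mean an injective map $\phi:X\to\reals^d$ with $x\le y$ in $\Pos$ iff $\phi(x)\leqslant\phi(y)$.

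\emph{From a realizer to an embedding.} Suppose $\odim(\Pos)\le d$, and let $L_1,\dots,L_t$ with $t\le d$ be a realizer. If $t<d$, repeat $L_1$ until we have exactly $d$ linear extensions $L_1,\dots,L_d$. Adding copies does not change the intersection, so this family still realizes $\Pos$.

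Since $X$ is finite, each $L_k$ is a finite total order. Define $\phi_k(x)$ to be the position of $x$ in $L_k$; this is a strictly order-preserving bijection of $L_k$ onto $\{1,\dots,n\}\subseteq\reals$. Put $\phi(x)=(\phi_1(x),\dots,\phi_d(x))$. Then
\[
x\le y \text{ in } \Pos \iff x\le y \text{ in every } L_k \iff \phi_k(x)\le\phi_k(y)\ \forall k \iff \phi(x)\leqslant\phi(y),
\]
where the first equivalence is exactly the statement that $\{L_k\}$ realizes $\Pos$. The map is injective because $\phi_1$ already is. So $\phi$ is an order embedding.

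\emph{From an embedding to a realizer.} Conversely, let $\phi:\Pos\hookrightarrow(\reals^d,\leqslant)$ be an order embedding. For each coordinate $k$, define a relation $L_k$ on $X$ by $x<_{L_k}y$ iff
\[
\phi_k(x)<\phi_k(y), \quad\text{or}\quad \phi_k(x)=\phi_k(y) \text{ and } \phi(x)<_{\mathrm{lex}}\phi(y),
\]
where $\phi(x)<_{\mathrm{lex}}\phi(y)$ compares the full vectors lexicographically. Lexicographic order is a total order on $\reals^d$, and $\phi$ is injective, so each $L_k$ is a linear order on $X$.

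Each $L_k$ extends $\Pos$. If $x<y$ in $\Pos$, then $\phi(x)\leqslant\phi(y)$ with $\phi(x)\ne\phi(y)$. Hence $\phi_k(x)\le\phi_k(y)$. In the case of equality, $\phi(x)<_{\mathrm{lex}}\phi(y)$ holds, because the first coordinate where the two vectors differ must be strictly smaller for $x$.

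The intersection of the $L_k$ is exactly $\Pos$. Suppose $x<_{L_k}y$ for all $k$. Then $\phi_k(x)\le\phi_k(y)$ for all $k$, so $\phi(x)\leqslant\phi(y)$, and therefore $x\le y$ in $\Pos$. Hence $\{L_1,\dots,L_d\}$ is a realizer, and $\odim(\Pos)\le d$.

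The main obstacle is the tie-breaking in the converse direction. Without a consistent tie-break, the $L_k$ could fail to be linear extensions when coordinates coincide. Using the global lexicographic order on $\phi$ is what makes the ties compatible with $\Pos$ in every coordinate at once.
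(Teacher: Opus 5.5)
The paper states this as an observation without proof; it is a classical fact about dimension taken from the background literature. So there is no argument in the paper to compare against. Your proof is correct and is essentially the standard one.

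\emph{Forward direction.} The position maps in the padded realizer $L_1,\ldots,L_d$ give an order embedding, injective already through $\phi_1$.

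\emph{Converse direction.} Your tie-break orders $x$ by the vector $(\phi_k(x),\phi(x))$ lexicographically. This is a strict total order on $X$ because $\phi$ is injective. It extends $\Pos$ because a distinct, componentwise-smaller vector is also lexicographically smaller. Your intersection argument is also right. This explicit tie-break has the small advantage of avoiding Szpilrajn's lemma (Lemma~\ref{lemma:extend}). The usual alternative route needs it: order each coordinate by $\phi_k$, refine ties by $\Pos$, then take a linear extension.

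\emph{Finiteness.} It is worth saying explicitly that you use the paper's standing assumption that $X$ is finite. It enters in the forward direction, where you map $L_k$ onto $\{1,\ldots,n\}$. Some countability hypothesis of this kind is genuinely needed there. For example, the chain $\omega_1$ has dimension $1$ but admits no order embedding into $(\reals,\le)$. The converse direction needs no finiteness at all.
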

The following defines our main objects of study in this article. 
\begin{definition}
For a poset ${\Pos} = (X,\leq)$ an \textit{alternating cycle} $S$
is a cyclically ordered tuple
\[
S = [(x_1,y_1),(x_2,y_2),\ldots,(x_k,y_k)]
\]
where each $(x_i,y_i) \in \inc(\Pos)$ 
and $y_i \leq x_{i+1}$ for each $i$ in a cyclic fashion modulo $k$. Two such alternating cycles $S$ and
$S' = [(x_1',y_1'),(x_2',y_2'),\ldots,(x_k',y_k')]$ are considered identical
iff there is a $ c\in \{0,1,\ldots,k-1\}$ such that
$(x_i',y_i') = (x_{i+c},y_{i+c})$ for each $i$ modulo $k$. Let $\mathcal{AC}(\Pos)$ 
denote the set of alternating cycles of $\Pos$. 

An alternating cycle is \textit{strict} if $y_i \leq x_j$ implies that $j=i+1$ modulo $k$. 
Let $\mathcal{AC}^s(\Pos)$ denote the set of strict alternating cycles of $\Pos$.
\end{definition}
\begin{example}
    Consider $\mathbb{Z}$ equipped with $x\leq'y$ iff $x|y$. Then $[(9,2)(4,7)(21,5)(10,3)]$ is an alternating cycle that is not strict (as $2|10$ and $3|21$) and 
$[(9,2)(4,7)(7,13)(169,3)]$
is a strict alternating cycle.
\end{example}
In \cite{TrotterMoore77} alternating cycles were first deployed to help establish planar posets with a maximum or minimum element have dimension at most 3. Alternating cycles have a number of useful properties. One important property is the following:
\begin{theorem}
\label{thm:cyclefree}
    Let $\Pos=(X,\leq)$ be a poset and $S \subseteq \inc(P)$. Then the following are equivalent:
    \begin{itemize}
        \item ${\tr}(\leq\cup S)$ is not a partial order on $X$
        \item $S$ contains an alternating cycle
        \item $S$ contains a strict alternating cycle
    \end{itemize}
\end{theorem}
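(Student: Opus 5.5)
We prove the three conditions equivalent by establishing the cycle (2)$\Rightarrow$(1)$\Rightarrow$(3)$\Rightarrow$(2). The implication (3)$\Rightarrow$(2) is immediate, since a strict alternating cycle is an alternating cycle. Throughout, ``$S$ contains an alternating cycle'' means that every pair $(x_i,y_i)$ of the cycle lies in $S$. By definition, every pair in $\inc(\Pos)$ consists of distinct elements.

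For (2)$\Rightarrow$(1), let $[(x_1,y_1),\dots,(x_k,y_k)]$ be an alternating cycle with all $(x_i,y_i)\in S$. Let $T={\tr}(\leq\cup S)$. Then $(x_1,y_1)\in T$, and for every $i$ we have $y_i\leq x_{i+1}$ and $(x_{i+1},y_{i+1})\in S$. Chaining these around the cycle gives $y_1\, T\, x_2\, T\, y_2\, T\cdots T\, y_k\, T\, x_1$, so $(y_1,x_1)\in T$. Together with $(x_1,y_1)\in T$ and $x_1\neq y_1$, antisymmetry fails, so $T$ is not a partial order. Reflexivity and transitivity always hold for $T$, so antisymmetry is the only property that can fail.

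For (1)$\Rightarrow$(3), assume $T$ is not antisymmetric. Then there is a closed walk $u_0,u_1,\dots,u_m=u_0$ with each step in $\leq\cup S$ and not all $u_i$ equal. The walk must use at least one $S$-step, because a cycle made only of $\leq$-steps forces all its elements to be equal by antisymmetry of $\leq$. Between consecutive $S$-steps we compress each maximal run of $\leq$-steps into a single comparability, using transitivity of $\leq$. This reduces the walk to a cyclic sequence $(x_1,y_1),\dots,(x_k,y_k)$ of pairs in $S$ with $y_i\leq x_{i+1}$ cyclically, which is an alternating cycle contained in $S$. (When $k=1$ we get $y_1\leq x_1$, which contradicts $x_1\| y_1$, so in fact $k\ge 2$.)

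Now choose, among all alternating cycles whose pairs lie in $S$, one of minimum length $k$. We claim it is strict. Suppose not, so $y_i\leq x_j$ for some $j\neq i+1$. Then the shortcut $[(x_j,y_j),(x_{j+1},y_{j+1}),\dots,(x_i,y_i)]$, with indices taken mod $k$ from $j$ through $i$, is again an alternating cycle: its closing condition is exactly $y_i\leq x_j$, and all its pairs lie in $S$. It has fewer than $k$ pairs, because it skips at least the index $i+1$. If $j=i$, it is the single pair $(x_i,y_i)$ with $y_i\leq x_i$, contradicting incomparability. Otherwise its length is at least $1$ and strictly less than $k$, contradicting minimality. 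Therefore the minimal cycle is strict, which proves (3).

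The main obstacle is the bookkeeping in this shortcut step. We must check that cutting from $j$ to $i$ yields a genuine cyclic tuple, with the degenerate case $j=i$ handled by incomparability. We must also check that minimality is taken over cycles using pairs of $S$, possibly with repeated pairs; repeated pairs cause no trouble, since the shortcut argument never needs the pairs to be distinct.
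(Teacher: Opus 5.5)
Your proof is correct. There is no proof in the paper to compare it with: the paper states this theorem as a known result from the literature (Trotter's monograph, going back to Trotter and Moore) and gives no argument of its own.

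Your argument is the standard textbook one. You compress a closed walk in $\leq\cup S$ that witnesses the failure of antisymmetry into an alternating cycle in $S$. You then show that a minimum-length such cycle is strict, by shortcutting at any extra comparability $y_i\leq x_j$ with $j\neq i+1$.

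One small point, only relevant if one insists that an alternating cycle consist of distinct pairs: a strict cycle automatically has distinct pairs. If $(x_i,y_i)=(x_j,y_j)$ with $i\neq j$, then $y_j=y_i\leq x_{i+1}$ with $i+1\neq j+1$, which violates strictness. So your minimal cycle qualifies under either reading.
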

Theorem~\ref{thm:cyclefree} can be leveraged to show that poset dimension is indeed 
well-defined, after a quick lemma from \cite{szpilrajn1930extension}:
\begin{lemma}
    \label{lemma:extend}
    Let $\Pos =(X, \leq)$ be a poset. Then $\Pos$ has a linear extension.
\end{lemma}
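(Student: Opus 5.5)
Since $X$ is finite, I will prove Lemma~\ref{lemma:extend} by induction on $n=|X|$, building the linear extension one element at a time by repeatedly removing a minimal element. The base case $n=1$ is immediate: the only partial order on a single point is already a total order, so $\Pos$ is its own linear extension.

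For the inductive step, first note that every finite nonempty poset has a minimal element. To find one, start at any element. If it is not minimal, move to some element strictly below it, and continue. By antisymmetry and transitivity, no element can be visited twice along such a strictly decreasing walk. Since $X$ is finite, the walk must stop, and it stops at a minimal element $m$. Now apply the induction hypothesis to the induced subposet $\Pos[X\setminus\{m\}]$, which gives a linear extension $L'$ of it. Define $L$ on $X$ by placing $m$ below every element and ordering the rest of $X$ as in $L'$.

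It remains to check that $L$ is a linear extension of $\Pos$. First, $L$ is a total order: it is the ordinal sum of a one-point chain with the total order $L'$. Second, suppose $x\leq y$ in $\Pos$; we need $x\leq y$ in $L$. There are three cases.
\begin{itemize}
\item If neither $x$ nor $y$ is $m$, then $x\leq y$ in $\Pos[X\setminus\{m\}]$, hence $x\leq y$ in $L'$, hence in $L$.
\item If $x=m$, then $x\leq y$ holds in $L$ by construction, since $m$ is the bottom of $L$.
\item If $y=m$, then $x\leq m$ together with minimality of $m$ forces $x=m$, so the relation is just reflexivity.
\end{itemize}

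Alternatively, Theorem~\ref{thm:cyclefree} gives a route that mirrors how the paper will later use the lemma. Take a maximal extension $\leq^*$ of $\leq$. If some pair $(x,y)$ were still incomparable in $\leq^*$, then adding $(y,x)$ cannot create an alternating cycle, so $\tr(\leq^*\cup\{(y,x)\})$ is a strictly larger partial order, contradicting maximality. The main care needed there is verifying that a single incomparable pair contains no alternating cycle. That holds because a one-term cycle $[(y,x)]$ would require $x\leq^* y$, contradicting incomparability. The induction argument above avoids this subtlety, so I would present it as the main proof.
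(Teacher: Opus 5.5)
The paper gives no proof of this lemma; it only cites Szpilrajn~\cite{szpilrajn1930extension}. Your main argument is correct and complete. Peeling off a minimal element and placing it at the bottom of a linear extension of the remainder (in effect a topological sort) is legitimate because the paper's standing convention is that all posets are finite. Your existence argument for a minimal element and your three-case check are both sound.

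The standard proof of Szpilrajn's theorem, which also covers infinite posets, is essentially your second route. Take a maximal partial order $\leq^*$ containing $\leq$. For an incomparable pair $x,y$, observe that $\leq^*\cup\{(a,b): a\leq^* y,\ x\leq^* b\}$, which is exactly $\tr(\leq^*\cup\{(y,x)\})$, is again a partial order, contradicting maximality. In general the existence of $\leq^*$ needs Zorn's lemma. In the finite case it is immediate because there are only finitely many relations on $X$, a point you left implicit.

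Using Theorem~\ref{thm:cyclefree} in place of that direct verification is fine and not circular. The paper's version of that theorem is phrased purely in terms of $\tr(\leq\cup S)$ being a partial order, not in terms of linear extensions reversing $S$. Your check that $\{(y,x)\}$ contains no alternating cycle is also correct: any cycle built only from copies of $(y,x)$, of any length, forces $x\leq^* y$.

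In short, your induction is more elementary and self-contained but tied to finiteness. The maximal-extension argument is more general and connects directly to the alternating-cycle machinery the paper uses in its proof that $\odim(\Pos)=\chi(\mathcal{H}^s_\Pos)$.
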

Equipped with this, the following holds:
\begin{theorem}
    \label{thm:welldef}
    Let $\Pos=(X,\leq)$ be a poset and let $\mathcal{E}(\Pos)$ be the set of all linear extensions of $\Pos$. Then $\Pos=\bigcap_{L_i \in \mathcal{E}(\Pos)}L_i$ and thus dimension is well-defined.
\end{theorem}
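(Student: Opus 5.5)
To prove Theorem~\ref{thm:welldef}, I will show the two inclusions between $\Pos$ and $\bigcap_{L_i \in \mathcal{E}(\Pos)} L_i$, viewing each as a set of ordered pairs. First, $\mathcal{E}(\Pos)$ is nonempty by Lemma~\ref{lemma:extend}, so the intersection is taken over a nonempty family and is a genuine relation on $X$. Since every $L_i$ is an extension of $\Pos$, every pair $x \leq y$ of $\Pos$ lies in each $L_i$. Hence $\leq$ is contained in the intersection.

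For the reverse inclusion it suffices to show that for every $(x,y) \in \inc(\Pos)$ some linear extension has $y < x$. Since $(x,y)$ and $(y,x)$ are both incomparable pairs, applying the same argument with the roles swapped also gives an extension with $x<y$. Thus no incomparable pair survives in the intersection. The intersection is then exactly $\leq$, since any pair of distinct elements lying in every $L_i$ must be comparable in $\Pos$, and being in the $L_i$ it must be $x\le y$ rather than $y<x$.

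To produce the extension, fix $(x,y) \in \inc(\Pos)$ and set $S=\{(y,x)\}\subseteq \inc(\Pos)$. I will apply Theorem~\ref{thm:cyclefree}: $\tr(\leq \cup S)$ fails to be a partial order iff $S$ contains an alternating cycle. A one-element alternating cycle $[(y,x)]$ would require $x \leq y$, which contradicts incomparability. So $S$ contains no alternating cycle, and $\Pos' = (X, \tr(\leq\cup S))$ is a poset. It contains $\leq$ and has $y \leq' x$. By Lemma~\ref{lemma:extend}, $\Pos'$ has a linear extension $L$. Then $L$ is also a linear extension of $\Pos$ with $y < x$, where strictness holds because $x \neq y$.

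Finally, the equality $\Pos=\bigcap_{L_i\in\mathcal{E}(\Pos)} L_i$ shows that $\mathcal{E}(\Pos)$ is a realizer. Realizers therefore exist, and since $\mathcal{E}(\Pos)$ is finite, the minimum cardinality $\odim(\Pos)$ is a well-defined positive integer. The only step needing care is the cycle-freeness check: one must confirm that the only candidate alternating cycle from a single pair is the length-one cycle, which is immediate since $S$ has one element.
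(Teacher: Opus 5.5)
Your proof is correct and follows the route the paper indicates. The paper writes no proof; it only says that Theorem~\ref{thm:cyclefree} together with Lemma~\ref{lemma:extend} yields the result. You carry this out: reverse one incomparable pair $(x,y)$, note that $\{(y,x)\}$ contains no alternating cycle (any link would force $x\leq y$), take the transitive closure, and linearly extend it.
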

Another consequence of Theorem \ref{thm:cyclefree} is that if $S \subseteq \crit(\Pos)$, there exists a linear extension reversing all pairs in $S$ iff $S^d = \{(y,x): (x,y) \in S\}$ does not contain any (strict) alternating cycles. 

Define the hypergraph $\mathcal{H}_\Pos = (\mathcal{V,E})$ with vertex set $\mathcal{V}$ consisting of elements of $\inc(\Pos)$ and with hyperedge set $\mathcal{E}$ as the subsets of $\mathcal{V}$ whose duals form alternating cycles. Note that alternating cycles need have at least two incomparable pairs, so this is a simple hypergraph. Analogously, the hypergraph $\mathcal{H}^s_\Pos$ is the same but with strict alternating cycles dual to its edges, as well as $\mathcal{K}_\Pos$ and $\mathcal{K}^s_\Pos$ denoting the same structures but using $\crit(\Pos)$ for the vertex sets. This establishes the following theorem 
from~\cite{Trotter}. Since no explicit proof is provided in~\cite{Trotter} we include one here for self containment. 
\begin{theorem}
    Let $\Pos=(X,\leq)$ be a poset that is not a chain. Then:
    $$\odim(\Pos) = \chi(\mathcal{H}_\Pos)=\chi(\mathcal{H}^s_\Pos)=\chi(\mathcal{K}_\Pos)=\chi(\mathcal{K}^s_\Pos).$$
\end{theorem}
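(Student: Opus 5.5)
We prove the chain of equalities by sandwiching: we show $\odim(\Pos)\ge$ each chromatic number via a coloring built from a realizer, and $\odim(\Pos)\le$ each chromatic number via a realizer built from a coloring. The hypothesis that $\Pos$ is not a chain guarantees that $\inc(\Pos)$, and hence $\crit(\Pos)$, is nonempty. This makes all four hypergraphs have nonempty vertex sets, so each chromatic number is at least $1$. Throughout, an edge of $\mathcal{H}_\Pos$ is a set $e\subseteq\inc(\Pos)$ such that $e^d$ is an alternating cycle. Strict cycles are alternating cycles, and cycles lying in $\crit(\Pos)^d$ are cycles in $\inc(\Pos)$. So $\mathcal{H}^s_\Pos$ and $\mathcal{K}^s_\Pos$ are sub-hypergraphs (on the same or smaller vertex sets) of $\mathcal{H}_\Pos$ and $\mathcal{K}_\Pos$ respectively, and $\mathcal{K}_\Pos$ is the sub-hypergraph of $\mathcal{H}_\Pos$ induced on $\crit(\Pos)$. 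Hence $\chi(\mathcal{K}^s_\Pos)\le\chi(\mathcal{K}_\Pos)\le\chi(\mathcal{H}_\Pos)$ and $\chi(\mathcal{K}^s_\Pos)\le\chi(\mathcal{H}^s_\Pos)\le\chi(\mathcal{H}_\Pos)$. It therefore suffices to prove $\chi(\mathcal{H}_\Pos)\le\odim(\Pos)\le\chi(\mathcal{K}^s_\Pos)$.

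\emph{Upper bound on $\chi(\mathcal{H}_\Pos)$.} Let $\{L_1,\dots,L_d\}$ be a realizer with $d=\odim(\Pos)$. By Theorem~\ref{thm:reverse}, each $(x,y)\in\inc(\Pos)$ has some $i$ with $y<x$ in $L_i$; color $(x,y)$ by the least such $i$. Suppose an edge $e$ were monochromatic with color $i$. Then $L_i$ reverses every pair of $e$, so $L_i$ contains $\leq\cup e^d$. Since $L_i$ is a linear order, ${\tr}(\leq\cup e^d)\subseteq L_i$ is a partial order. But $e^d$ is itself an alternating cycle, which contradicts Theorem~\ref{thm:cyclefree}. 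Hence this is a weak coloring with at most $d$ colors.

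\emph{Upper bound on $\odim(\Pos)$.} Let $c$ be a weak coloring of $\mathcal{K}^s_\Pos$ with $k$ colors, and let $C_1,\dots,C_k$ be its color classes in $\crit(\Pos)$. Each $C_j^d$ contains no strict alternating cycle: such a cycle, dualized, would be an edge of $\mathcal{K}^s_\Pos$ lying inside $C_j$, hence monochromatic. By Theorem~\ref{thm:cyclefree}, ${\tr}(\leq\cup C_j^d)$ is a partial order on $X$. By Lemma~\ref{lemma:extend}, it has a linear extension $L_j$, which is also a linear extension of $\Pos$ and reverses every critical pair in $C_j$. Every critical pair lies in some $C_j$, so by the third criterion of Theorem~\ref{thm:reverse}, $\{L_1,\dots,L_k\}$ realizes $\Pos$. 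Thus $\odim(\Pos)\le k$.

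The main obstacle is bookkeeping rather than depth. First, the duality conventions must be kept straight: edges are sets whose duals are cycles, and Theorem~\ref{thm:cyclefree} is applied to $e^d$ and $C_j^d$, which lie in $\inc(\Pos)$ because incomparability is symmetric. Second, the sub-hypergraph monotonicity relies on the fact that a set of pairs from $\crit(\Pos)$ forming an edge of $\mathcal{K}$-type is also an edge of the corresponding $\mathcal{H}$-type. Once these are in place, the two inequalities close the chain.
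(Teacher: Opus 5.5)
Your proof is correct, and it is organized differently from the paper's. The paper proves only $\chi(\mathcal{H}^s_\Pos)=\odim(\Pos)$ directly, in both directions, and asserts that the other three cases are ``shown similarly.'' Its two directions are a coloring built from a realizer, and a realizer built from a coloring whose color classes cover all of $\inc(\Pos)$.

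You instead observe two things. First, $\mathcal{H}^s_\Pos$, $\mathcal{K}_\Pos$ and $\mathcal{K}^s_\Pos$ are spanning or induced sub-hypergraphs of $\mathcal{H}_\Pos$. Second, weak chromatic number is monotone under passing to such sub-hypergraphs. As a result, only the two extreme inequalities $\chi(\mathcal{H}_\Pos)\le\odim(\Pos)\le\chi(\mathcal{K}^s_\Pos)$ need substantive arguments. For the first you color the hypergraph with the most edges; for the second you build a realizer from a coloring of the hypergraph with the fewest vertices and edges.

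What this buys you is that all four equalities are genuinely proved rather than left to analogy. This matters most for the $\mathcal{K}$-cases, where the paper's converse argument does not transfer verbatim. The color classes of a weak coloring of $\mathcal{K}^s_\Pos$ cover only $\crit(\Pos)$, so one cannot conclude that every incomparable pair is reversed. You handle this correctly with the critical-pair criterion of Theorem~\ref{thm:reverse}.

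You also invoke the right tool, Theorem~\ref{thm:cyclefree}, for the step that $\tr(\leq\cup C_j^d)$ is a partial order. At the corresponding step the paper cites Theorem~\ref{thm:reverse}.

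The paper's route is shorter on the page and gives a self-contained two-sided argument for a single hypergraph. Your sandwich is more economical overall. It also shows that any hypergraph lying between $\mathcal{K}^s_\Pos$ and $\mathcal{H}_\Pos$ in this sense has weak chromatic number $\odim(\Pos)$.
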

\begin{proof}
    All four cases of $\mathcal{H}(\Pos),\mathcal{H}^s(\Pos),\mathcal{K}(\Pos)$, and $\mathcal{K}^s(\Pos)$ are shown similarly, so we will show that 
    $\chi(\mathcal{H}^s(\Pos))=\odim(\Pos)$.
    
    Suppose $\odim(\Pos)=d$, so there exist linear extensions $L_1,\dots,L_d$ realizing $\Pos$ as $\Pos=\bigcap_{i=1}^d L_i$. Clearly 
    no $L_i$ reverses all pairs of the dual of a strict alternating cycle. Hence, we can assign color 1 to each vertex (incomparable pair $(x,y)\in \inc(\Pos)$) reversed in $L_1$, color 2 to each reversed by $L_2$, etc. This process halts after at most
    $d$ steps and it yields a (weak) $d$-coloring of $\mathcal{H}^s(\Pos)$, and so $\chi(\mathcal{H}^s(\Pos))\leq d$.
    
    Conversely, if $\chi(\mathcal{H}^s(\Pos))=d$, then there is no monochromatic hyperedge in $\mathcal{H}^s(\Pos)$. By Theorem \ref{thm:reverse} and Lemma \ref{lemma:extend}, there exists a linear extension $L_i$ of ${\tr}(\leq\cup S_i)$, where $S_i$ reverses each pair assigned the color $i$. Then the $L_i$ form a family of linear extensions of cardinality at most $d$ that reverse each pair $(x,y)\in \inc(\Pos)$ at least once, and thus $\odim(\Pos)\leq d=\chi(\mathcal{H}^s(\Pos)).$ 
\end{proof}
With these parallels to (hyper)graph coloring established, it should be no surprise that computing poset order dimension is NP-complete \cite{YannakakisNP}. The order dimension of small posets can be computed, and finding upper bounds can be made tractable, as in \cite{yanez1999poset}, but the general problem quickly becomes infeasible. Having established the appropriate definitions, we can now enumerate strict alternating cycles of a general poset.

\section{Enumeration for the Trivial Poset $\Pos_0$}

Sometimes, starting on this type of problem requires one to first simply establish one case that can be counted, and so one proceeds by considering the simplest case first. Other times it may be most useful to ponder what the limiting case may be, and then enumerate that one. In this case, these coincide: the trivial poset $\Pos_0$ (see Section~\ref{sec:intro} or Definition~\ref{def:trivial} here below) is both perhaps the simplest to consider and ends up being the maximal case. To make things abundantly clear we make the following definition.
\begin{definition}
\label{def:trivial}
    Let the \textit{trivial poset} $\Pos_0=(X,=)$ denote the poset whose only relations are of the form $x\leq y \iff x=y$ for all $x,y\in X$, and therefore $(x,y) \in \inc(\Pos) \iff x\neq y$. 
\end{definition}
For $\Pos_0$ all links in alternating cycles will happen via $y_i=x_{i+1}$. Because of this, a strict alternating cycle can represented as a directed cycle where each vertex is an element of $X$.
Since any $n$-element subset of $X$ with $n\geq 2$, given a cyclic order, can be a strict alternating cycle in this way, the number of strict alternating cycles is equal to the number of unique cyclic orderings of such $n$-element subsets of $X$. Figure \ref{fig:digraphcycle} depicts one of $5!=120$ possible on its 6 mutually incomparable elements. 
\begin{figure}[h!]
    \centering
    \begin{tikzpicture}
        \node (A) at (-2, 0) {$x_1$};
        \node (B) at (-1, -2) {$x_2$};
        \node (C) at (1, -2) {$x_3$};
        \node (D) at (2, 0) {$x_4$};
        \node (E) at (1, 2) {$x_5$};
        \node (F) at (-1, 2) {$x_6$};
        \draw [->] (A) edge (B) (B) edge (C) (C) edge (D) (D) edge (E) (E) edge (F) (F) edge (A);
    \end{tikzpicture}
    \caption{Alternating cycle $S =\left[(x_1,x_2)(x_2,x_3)(x_3,x_4)(x_4,x_5)(x_5,x_6)(x_6,x_1)\right]$ as a digraph cycle}
    \label{fig:digraphcycle}
\end{figure}

In this way we obtain the following proposition:

\begin{proposition}
    \label{prp:trivposet}
    Let $\Pos_0=(X,=)$ be the trivial poset on $|X|=n$ elements. Then the number of strict alternating cycles on $\Pos_0$ is given by: 
    $$|\mathcal{AC}^s(\Pos_0)|=\sum_{k=2}^n \binom{n}{k}(k-1)!=n!g(n)$$
    where
    $$g(n) = \sum_{k=0}^{n-2}\frac{1}{k!}\frac{1}{n-k}.$$
\end{proposition}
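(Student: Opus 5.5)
The plan has two parts: first establish a bijection between strict alternating cycles of $\Pos_0$ and directed cycles on subsets of $X$, then do an algebraic reindexing to get the closed form $n!g(n)$.

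\textbf{The bijection.} In $\Pos_0$ the only relations are equalities. So the condition $y_i \leq x_{i+1}$ in an alternating cycle forces $y_i = x_{i+1}$ for every $i$ modulo $k$. Hence an alternating cycle $[(x_1,y_1),\dots,(x_k,y_k)]$ is determined by the cyclic sequence $x_1,x_2,\dots,x_k$, since $y_i=x_{i+1}$. Each pair is incomparable, so $x_i\neq x_{i+1}$.

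Strictness says $y_i\leq x_j$, i.e.\ $x_{i+1}=x_j$, only when $j=i+1$. This is exactly the requirement that $x_1,\dots,x_k$ be pairwise distinct. For $k\geq 2$ distinctness also gives $x_i\neq x_{i+1}$ automatically, and $k=1$ is impossible because it would need $x_1\neq x_1$.

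Conversely, take any $k$-subset of $X$ with $k\geq 2$ and any cyclic arrangement of it. Setting $y_i=x_{i+1}$ yields a strict alternating cycle. Two cycles are identified precisely under cyclic shift of the index, which matches identifying cyclic arrangements up to rotation; there is no reflection, since the cycles are directed.

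\textbf{The count.} A $k$-set has $k!/k=(k-1)!$ cyclic orders. Summing over $k$ gives $\sum_{k=2}^n\binom nk(k-1)!$.

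\textbf{The closed form.} Write
\[
\binom nk(k-1)! = \frac{n!}{k\,(n-k)!}.
\]
Substitute $j=n-k$, so that $j$ runs from $0$ to $n-2$ as $k$ runs from $n$ down to $2$. Then
\[
\sum_{k=2}^n \frac{n!}{k\,(n-k)!}=n!\sum_{j=0}^{n-2}\frac{1}{j!}\,\frac{1}{n-j}=n!\,g(n).
\]

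The main point needing care is the identification step: checking that strictness is exactly distinctness of the $x_i$, and that the equivalence on alternating cycles is exactly rotation. This is what makes the factor $(k-1)!$ correct rather than $k!$ or $(k-1)!/2$. The algebraic reindexing is routine.
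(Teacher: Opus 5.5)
Your proposal is correct and follows the paper's argument. The paper likewise observes that in $\Pos_0$ every link is $y_i=x_{i+1}$, so strict alternating cycles are exactly directed cycles on $k$-subsets of $X$ with $k\geq 2$, giving the count $\binom{n}{k}(k-1)!$. You additionally spell out what the paper leaves implicit: that strictness is equivalent to the $x_i$ being pairwise distinct, that cycles are identified only by rotation, and the reindexing $j=n-k$ that yields $n!\,g(n)$.
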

From Proposition~\ref{prp:trivposet} it is clear the quantity $|\mathcal{AC}^s(\Pos_0)|$ is ultimately at least as large as the $(n-1)!$ term appearing in the $k=n$ term, as will be explored shortly, and therefore grows extremely rapidly. A table of values is included in the Appendix in Table \ref{table:1}. 

Using  $g(n)$ has a number of advantages. One is that it becomes very easy to talk about the associated exponential generating function. Given a full (and at times humorous) treatment in \cite{Graham-Knuth-Patashnik}, the \textit{exponential generating function} of a sequence $(g(n))_{n\geq 0}$ is a function given by 
$$\hat{G}(x)=\sum_{n \geq 0} g(n) \frac{x^n}{n!}.$$ 
This is often useful, as it allows analytic tools to be leveraged at otherwise discrete problems. 

Consider the exponential generating function $c_{{\Pos}_0}(x)$ of the
number of alternating cycles of ${\Pos}_0$ in terms of $n = |X|$. If $g(n)$  is as defined in Proposition \ref{prp:trivposet}, the corresponding sequence $(g(n))_{n\geq 0}$ is visibly the discrete convolution of the sequences $(f(n))_{n \geq 0}$ and $(h(n))_{n \geq 0}$ where $f(n)=\frac{1}{n!}$ and $h(n)= \frac{1}{n}$ for $n\geq 2$ and $h(0)=h(1)=0$. Then the generating function $g(x)$ is given by the Cauchy product of the respective generating functions $f(x)=\sum_{n=0}^\infty\frac{x^n}{n!}=e^x$ and $h(x)=\sum_{n=2}^{\infty}\frac{x^n}{n}=-\ln(1-x)-x$. Thus, the following holds for $c_{\Pos_0}(x)$:
\begin{observation}
    The exponential generating function $c_{\Pos_0}(x)$ is given by:
    $$c_{\Pos_0}(x)=\sum_{n=0}^\infty (n! g(n))\frac{x^n}{n!}=\sum_{n=0}^\infty g(n)x^n=-e^x(\ln(1-x)-x).$$
\end{observation}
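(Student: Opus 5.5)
The identity is a formal power series computation, so I would argue coefficientwise and treat all series formally (they also converge for $|x|<1$).

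\textbf{Reduction to an ordinary generating function.} By Proposition~\ref{prp:trivposet}, the $n$-th term of the sequence is $a_n=n!\,g(n)$. Hence its exponential generating function is
\[
\sum_{n\ge 0} n!\,g(n)\frac{x^n}{n!}=\sum_{n\ge0} g(n)x^n ,
\]
which is the ordinary generating function of $(g(n))_{n\ge0}$. For $n=0,1$ the sum defining $g(n)$ is empty, so $g(0)=g(1)=0$. This agrees with the absence of alternating cycles on fewer than two elements.

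\textbf{Convolution step.} Put $f(k)=1/k!$ for all $k\ge0$, and $h(m)=1/m$ for $m\ge2$ with $h(0)=h(1)=0$. I would verify that
\[
g(n)=\sum_{k=0}^{n} f(k)\,h(n-k)\quad\text{for every } n\ge0 .
\]
The terms with $k=n-1$ and $k=n$ vanish because $h(1)=h(0)=0$, so the sum stops at $k=n-2$, exactly as in the definition of $g(n)$. The same argument gives $0$ for $n\le1$. By the Cauchy product rule, $\sum_n g(n)x^n=F(x)H(x)$, where $F(x)=\sum_k x^k/k!=e^x$ and
\[
H(x)=\sum_{m\ge2}\frac{x^m}{m}=\Bigl(\sum_{m\ge1}\frac{x^m}{m}\Bigr)-x=-\ln(1-x)-x .
\]
Therefore
\[
c_{\Pos_0}(x)=e^x\bigl(-\ln(1-x)-x\bigr)=-e^x\bigl(\ln(1-x)+x\bigr).
\]

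\textbf{Expected obstacle: a sign in the displayed formula.} Nothing in the derivation is hard; the only delicate point is bookkeeping the sign of the linear term. As printed, the closed form reads $-e^x(\ln(1-x)-x)$. Expanding this gives $e^x(2x+x^2/2+\cdots)$, whose coefficient of $x$ is $2$, whereas $g(1)=0$. It also conflicts with the paper's own formula $h(x)=-\ln(1-x)-x$ stated just before the Observation.

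I would therefore prove the corrected identity $c_{\Pos_0}(x)=-e^x(\ln(1-x)+x)$ and remark that the printed version contains a sign typo. As a final check, the coefficient of $x^2$ in $e^x(x^2/2+x^3/3+\cdots)$ is $1/2=g(2)$, matching $2!\,g(2)=1$, the single $2$-cycle on two points. The coefficient of $x^3$ is $1/3+1/2=5/6=g(3)$, matching $3!\,g(3)=5=\binom{3}{2}\cdot1!+\binom{3}{3}\cdot2!$.
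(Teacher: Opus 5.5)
Your proof is correct and is the same as the paper's: you write $g(n)$ as the convolution of $f(k)=1/k!$ with $h(m)=1/m$ for $m\ge 2$ (and $h(0)=h(1)=0$), then take the Cauchy product $e^x\bigl(-\ln(1-x)-x\bigr)$. Your sign remark is also right: the paper itself derives $h(x)=-\ln(1-x)-x$, so the printed $-e^x(\ln(1-x)-x)$ is a typo for $-e^x(\ln(1-x)+x)$, as your check against $g(1)=0$ confirms.
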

This is noteworthy for simply having a nice closed form. It bears interesting similarity to a formula arising from the ``samplesort'' algorithm (see page 354 of \cite{Graham-Knuth-Patashnik}), which is a divide-and-conquer sorting algorithm generalizing quicksort \cite{frazer1970samplesort}. That a key step in optimal performance of this algorithm is dividing into approximately equal size ``buckets'' is probably not a coincidence, given results appearing in the next section on posets of bounded width.

\section{$\Pos_0$ as the Upper Bound}

Having established the number of strict alternating cycles possible on $\Pos_0=(X,=)$ when $|X|=n$, it is natural to next consider where that fits in relative to some general poset on the same ground set, $\Pos=(X,\leq)$. What comparisons can be made between two posets with the same ground set but differing relations? The easiest case to consider is when one poset has all the relations of the other, plus some additional relations. Formally,  consider
$\Pos' = (X,\leq')$ such that $\Pos=(X,\leq)$ is an extension of $\Pos'$. Recall that the identity map $X\rightarrow X$ induces an order preserving 
embedding $\Pos'\hookrightarrow\Pos$ and $x\leq'y$ in $\Pos'$ implies $x\leq y$
in $\Pos$.  Now consider alternating cycle $S = [(x_1,y_1),(x_2,y_2),\ldots,(x_k,y_k)]$ in $\Pos$. Because $x_i$ and $y_i$ are incomparable in 
$\Pos$ for each $i$, it must be that $x_i\neq y_i$ for each $i$ and that 
$x_i$ and $y_i$ are incomparable in $\Pos'$. Then an alternating cycle $S'$ in $\Pos'$ can be created from $S$ as we will now demonstrate: 
because $y_i \leq x_{i+1}$  in $\Pos$ for each $i$ in a cyclic fashion modulo $k$, either $y_i\leq' x_{i+1}$ or $y_i||x_{i+1}$ in $\Pos'$. If $y_i||x_{i+1}$ in ${\Pos}',$ add the ordered tuple $(y_i,x_{i+1})$ into $S$
between $(x_i,y_i)$ and $(x_{i+1},y_{i+1})$. Doing this for each $i$, cyclically modulo 
$k$, we obtain an alternating cycle $S'$ in the poset $\Pos'$ that has ``fixed'' any links that are valid in $\Pos$ but not $\Pos'$. Using this construction, one can map alternating cycles from any $\Pos$ to a poset $\Pos'$ on the same groundset with a subset of its relations. In fact, we will see that this preserves strictness, but that will require proving. Notably, the poset with the smallest possible subset of relations for a given groundset $X$ will be $\Pos_0=(X,=)$. First, consider a quick example. 
\begin{example}
\label{exa:S-S'}
Suppose $S= [(x_1,y_1),(x_2,y_2),(x_3,y_3),(x_4,y_4),(x_5,y_5)]$
is a strict alternating cycle in a poset ${\Pos}$ where
$y_1 < x_2,~y_2 = x_3,~y_3 < x_4,~ y_4 <~x_5,~y_5 = x_1$. In this case
the corresponding alternating cycle $S'$ in the trivial poset 
${\Pos}_0 = (X,=)$ is given by
\[
S' =
[(x_1,y_1),(y_1,x_2),(x_2,y_2),(x_3,y_3),
(y_3,x_4),(x_4,y_4),(y_4,x_5),(x_5,y_5)]. 
\]
\end{example}
For a given image alternating cycle $S'$, the alternating cycle  $S$ can be retrieved by removing each ordered tuple of comparable elements in $\Pos$ 
from $S'$. Therefore, we have the following:
\begin{observation}
\label{obs:nonconsec-inj}
Let $\Pos = (X,\leq)$ and $\Pos' = (X,\leq')$ be posets with $\Pos' \subseteq \Pos$ as above where 
the identity map on $X$ yields an order preservation $\Pos'\rightarrow\Pos$.
The map ${\cal{AC}}({\Pos})\rightarrow {\cal{AC}}({\Pos'})$ given by
$S\mapsto S'$ is well-defined and injective. By definition, each added 
tuple in $S'$ is always between two ordered tuples originally in $S$, and therefore no two consecutive tuples will have been added.
\end{observation}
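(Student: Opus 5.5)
The statement to prove is Observation~\ref{obs:nonconsec-inj}. It has three parts: the map $S\mapsto S'$ is well defined, it is injective, and no two consecutive tuples of $S'$ are inserted ones. I will take them in that order, using only the definitions of $\inc$, alternating cycles, and the inclusion $\leq'\subseteq\leq$.

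\emph{Well-definedness.} Let $S=[(x_1,y_1),\dots,(x_k,y_k)]\in\mathcal{AC}(\Pos)$.

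1. Every original pair lies in $\inc(\Pos')$. If $x_i$ and $y_i$ were comparable in $\Pos'$, they would be comparable in $\Pos$, since $\leq'\subseteq\leq$.
2. Every inserted pair $(y_i,x_{i+1})$ lies in $\inc(\Pos')$. A pair is inserted exactly when $y_i||x_{i+1}$ in $\Pos'$; note $y_i\neq x_{i+1}$ here, because equality gives $y_i\leq' x_{i+1}$ by reflexivity.
3. The links of $S'$ hold in $\Pos'$. If nothing is inserted after position $i$, then $y_i\leq' x_{i+1}$ by construction. If $(y_i,x_{i+1})$ is inserted, the two new links are $y_i\leq' y_i$ and $x_{i+1}\leq' x_{i+1}$, both by reflexivity.
4. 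The construction respects the identification of cyclic rotations. The insertion rule depends only on each pair and its cyclic successor, so rotating $S$ by $c$ rotates $S'$ by the corresponding shift. Hence $S'$ is a well-defined element of $\mathcal{AC}(\Pos')$.

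\emph{Injectivity.} I will recover $S$ from $S'$ by deleting every tuple of $S'$ whose entries are comparable in $\Pos$.

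1. Every inserted tuple $(y_i,x_{i+1})$ is deleted, since $y_i\leq x_{i+1}$ in $\Pos$ by the definition of an alternating cycle in $\Pos$.
2. No original tuple $(x_i,y_i)$ is deleted, since $x_i||y_i$ in $\Pos$.
3. Deletion keeps the surviving tuples in their cyclic order and commutes with rotation. So $S$ is recovered as a cyclic tuple, and the map has a left inverse, which gives injectivity.

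The one point needing care is that "deletion" is a well-defined operation on a cyclic tuple rather than on a chosen linear representative. This holds because the test for deletion is intrinsic to each tuple, not to its position.

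\emph{Non-consecutiveness.} Inserted tuples are placed only in the gap between $(x_i,y_i)$ and $(x_{i+1},y_{i+1})$, and at most one per gap. So every inserted tuple has original tuples on both sides of it in $S'$.

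I expect no real obstacle in any of these steps. The only subtle point is rotation invariance, handled in both the well-definedness and injectivity parts above.
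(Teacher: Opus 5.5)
Your proposal is correct and follows the paper's argument: the paper likewise recovers $S$ from $S'$ by deleting exactly the tuples whose entries are comparable in $\Pos$, and it reads off non-consecutiveness from the fact that at most one tuple is inserted per gap. Your extra checks fill in details the paper leaves implicit: membership of all tuples in $\inc(\Pos')$, the reflexivity links around inserted tuples, and compatibility with cyclic rotation. The one step you pass over with ``by construction'' is that a non-inserted link satisfies $y_i\leq' x_{i+1}$ rather than $x_{i+1}<' y_i$; this follows from $\leq'\subseteq\leq$ and antisymmetry, and the paper takes it as part of its setup.
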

As mentioned, it is time to establish that this map preserves the property of strictness: 
if $S$ is strict in $\Pos$, we will show that $S'$ is strict in $\Pos'.$ 
\begin{proof}
Consider an alternating cycle 
$S = [(x_1,y_1),(x_2,y_2),\ldots,(x_k,y_k)]$ in $\Pos$, and corresponding 
alternating cycle $S'$ in $\Pos'$ denoted 
$S' = [(x_1',y_1'),(x_2',y_2'),\ldots,(x_h',y_h')],$ where $h\geq k$.
Suppose that $S'$ is not a strict alternating cycle. Because $S'$ is cyclic, it can be assumed that $y_1'\leq'x_i'$, for some  $i\in\{3,\ldots,h\}$. Then there are four cases to consider: 

{\sc First case:} Neither the tuple $(x_1',y_1')$ nor the tuple
$(x_i',y_i')$ has been added to $S$ obtain $S'$, and so both are in $S$. 
If $i=3$ and the tuple $(x_2',y_2')$ was added to $S$ to obtain $S'$, then by assumption we have $x_2' = y_1' \leq' x_3'= y_2'$ in $\Pos'$ 
contradicting that $x_2'$ and $y_2'$ are incomparable in $\Pos'$. Hence, by Observation~\ref{obs:nonconsec-inj}, either $i\geq 4$ 
or $(x_2',y_2')$ was in $S$. In either case there is at least one non-added tuple from $S$ between $(x_1',y_1')$ and $(x_i',y_i')$. 
Therefore $S$ is also not strict.

{\sc Second case:} The tuple $(x_1',y_1')$ has been added to $S$ and
$(x_i',y_i')$ has not been added to $S$ to obtain $S'$. By Observation~\ref{obs:nonconsec-inj}
both the tuples $(x_h',y_h')$ and $(x_2',y_2')$ are from $S$ and were not added. 
In this case, since $(x_1',y_1')$ was added, we have $x_1' = y_h' < x_2' = y_1'$ in $\Pos$. 
Also, since $y_1'\leq'x_i'$ we have $y_h' = x_1' < y_1'\leq x_i'$ in $\Pos$. Since $i\neq 2$, $S$ must not be strict. 

{\sc Third case:} The tuple $(x_1',y_1')$ has not been added to $S$ and
$(x_i',y_i')$ has been added to $S$ to obtain $S'$. By Observation~\ref{obs:nonconsec-inj}
both the tuples $(x_{i-1}',y_{i-1}')$ and $(x_{i+1}',y_{i+1}')$ are 
from $S$ and were not added. In this case $y_1' \leq' x_i' < y_i'\leq x_{i+1}'$ 
in $\Pos$ and there
is at least one non-added tuple (namely, the tuple $(x_{i-1}',y_{i-1}')$) between $(x_1',y_1')$
and $(x_{i+1}',y_{i+1}')$. If $i=h$, and so cyclically $i+1 = 1$, then $y_1'\leq x_1'$
contradicting $x_1'||y_1'$. Therefore $i\leq h-1$ and hence $S$ is not strict.

{\sc Fourth case:} Both the tuples $(x_1',y_1')$ and $(x_i',y_i')$ were  added to 
$S$ to obtain $S'$. By Observation~\ref{obs:nonconsec-inj} we must have $3\leq i\leq h-1$ and
neither $(x_h',y_h')$ nor $(x_{i+1}',y_{i+1}')$ were added. In this case
$y_h' = x_1' < y_1'\leq x_i' < y_i' = x_{i+1}'$ in $\Pos$. Since $x_h'||y_h'$ in 
$\Pos,$ it therefore holds that $i+1\neq h$ and so $i\leq h-2$. Thus $S$ is not strict. 

\end{proof}
Therefore the following proposition holds:
\begin{proposition}
\label{prp:stict2strict}
Let $\Pos = (X,\leq)$ and $\Pos' = (X,\leq')$ be posets  with $\Pos' \subseteq \Pos$ as above, where 
the identity map on $X$ yields an order preservation $\Pos'\rightarrow\Pos$.
The map $S\mapsto S'$ from Observation~\ref{obs:nonconsec-inj} yields an 
injection ${\cal{AC}}^s({\Pos})\hookrightarrow {\cal{AC}}^s({\Pos'})$.
In particular,  $|{\cal{AC}}^s({\Pos})|\leq |{\cal{AC}}^s({\Pos'})|$.
\end{proposition}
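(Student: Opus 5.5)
The proposition is essentially assembled from pieces already established, so the plan is to combine them in three steps.

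\textbf{Step 1: the map lands in alternating cycles of $\Pos'$.} For $S\in{\cal AC}^s(\Pos)\subseteq{\cal AC}(\Pos)$ we form $S'$ as in Observation~\ref{obs:nonconsec-inj}.
\begin{itemize}
\item Each original tuple $(x_i,y_i)$ is incomparable in $\Pos$. Since $\leq'\subseteq\leq$, it is also incomparable in $\Pos'$.
\item Each inserted tuple $(y_i,x_{i+1})$ is, by construction, an incomparable pair in $\Pos'$.
\item Every consecutive link of $S'$ is a valid relation in $\Pos'$. Either $y_i\leq' x_{i+1}$ held directly, or the link was split as $y_i = y_i$ followed by $x_{i+1}=x_{i+1}$, and equality is a relation of $\leq'$ by reflexivity.
\end{itemize}
Hence $S'\in{\cal AC}(\Pos')$.

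\textbf{Step 2: the image is strict.} This is exactly what the four-case argument just given establishes, in contrapositive form. We suppose $S'$ is not strict and, after a cyclic rotation, take a bad link $y_1'\leq' x_i'$ with $i\neq 2$. We then split by whether the tuples $(x_1',y_1')$ and $(x_i',y_i')$ are original or inserted, and in each case we produce an illegal link in $S$ or a contradiction with incomparability. The only care needed is the use of Observation~\ref{obs:nonconsec-inj}: inserted tuples are never adjacent, so the neighbours of any inserted tuple are original. Together with transitivity of $\leq$ and the fact that $\leq'\subseteq\leq$, this transports a chain $y_a\leq\dots\leq x_b$ back into $S$ with $b\neq a+1$. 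We would invoke this argument as proved.

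\textbf{Step 3: injectivity and the counting bound.} Injectivity on ${\cal AC}^s(\Pos)$ follows from injectivity on all of ${\cal AC}(\Pos)$ in Observation~\ref{obs:nonconsec-inj}. The reason is that $S$ is recovered from $S'$ by deleting exactly those tuples that are comparable pairs in $\Pos$, since every inserted tuple $(y_i,x_{i+1})$ satisfies $y_i<x_{i+1}$ in $\Pos$. This inverse is compatible with the cyclic-rotation equivalence, so the map is well defined on equivalence classes. The restriction ${\cal AC}^s(\Pos)\to{\cal AC}^s(\Pos')$ is therefore an injection, and the cardinality inequality $|{\cal AC}^s(\Pos)|\leq|{\cal AC}^s(\Pos')|$ follows immediately.

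\textbf{Main obstacle.} The hard part is Step 2, and in particular the boundary cases in which the violating index wraps around cyclically ($i=h$ or $i+1=h$). There one must check that the derived relation in $\Pos$ is not the permitted link $y_a\leq x_{a+1}$ but a genuine violation, or else a contradiction such as $y_1'\leq x_1'$. Since the preceding proof handles these cases, the remaining work is bookkeeping. As a corollary, taking $\Pos'=\Pos_0$ gives $|{\cal AC}^s(\Pos)|\leq n!g(n)$ by Proposition~\ref{prp:trivposet}.
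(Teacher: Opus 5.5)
Your proposal is correct and follows the paper's own route. Well-definedness and injectivity come from the insertion construction of Observation~\ref{obs:nonconsec-inj}, and strictness of $S'$ is exactly the paper's four-case analysis on whether the offending tuples $(x_1',y_1')$ and $(x_i',y_i')$ are original or inserted, using that inserted tuples are never adjacent; your explicit checks of the links of $S'$ via reflexivity of $\leq'$ and of the recovery of $S$ via $y_i<x_{i+1}$ in $\Pos$ just fill in details the paper states without proof.
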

For any given poset, removing relations therefore increases the number of strict alternating cycles. For a given groundset $X$, removing as many relations as possible from $\Pos=(X,\leq)$ yields the trivial poset $\Pos_0 = (X,=)$. Therefore:
\begin{proposition}
\label{prp:max-trivial}
The maximum number 
${\cal{MAC}}^s(n) = \max\{|{\cal{AC}}^s({\Pos})| : |X| \leq n\}$ 
of strict alternating cycles of a poset $\Pos = (X,\leq)$ with $|X|\leq n$ 
satisfies ${\cal{MAC}}^s(n) = |{\cal{AC}}^s({\Pos}_0)|$ 
, where ${\Pos}_0 = (X,=)$ is the trivial poset with $|X|=n$. 
\end{proposition}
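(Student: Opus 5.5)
The plan is to prove two inequalities: ${\cal{MAC}}^s(n) \geq |{\cal{AC}}^s(\Pos_0)|$ and ${\cal{MAC}}^s(n) \leq |{\cal{AC}}^s(\Pos_0)|$. The lower bound is immediate, since the trivial poset $\Pos_0=(X,=)$ with $|X|=n$ is itself one of the posets in the set over which the maximum is taken.

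For the upper bound, take an arbitrary poset $\Pos=(X,\leq)$ with $|X|=m\leq n$. I will compare it with the trivial poset $(X,=)$ on the same groundset. Equality is contained in every partial order, so the identity map gives an order preservation $(X,=)\rightarrow\Pos$, i.e.\ $(X,=)\subseteq\Pos$. Proposition~\ref{prp:stict2strict} then yields an injection ${\cal{AC}}^s(\Pos)\hookrightarrow{\cal{AC}}^s((X,=))$, and hence $|{\cal{AC}}^s(\Pos)|\leq |{\cal{AC}}^s((X,=))|$.

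The remaining step, and the only one with any content, is to handle groundsets with $m<n$. Here I use monotonicity in the size of the groundset. By Proposition~\ref{prp:trivposet}, $|{\cal{AC}}^s((X,=))| = \sum_{k=2}^m\binom{m}{k}(k-1)!$. Each summand is nondecreasing in $m$, and passing from $m$ to $n$ only adds further nonnegative terms. A direct alternative is to embed $X$ into an $n$-element set $Y$: every strict alternating cycle of $(X,=)$ is still a strict alternating cycle of $(Y,=)$, because it uses only elements of $X$ and strictness involves only those elements. Either argument gives $|{\cal{AC}}^s(\Pos)|\leq|{\cal{AC}}^s(\Pos_0)|$ for $\Pos_0$ on $n$ elements.

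Taking the maximum over all posets with $|X|\leq n$ then gives the claimed equality. No real obstacle is expected: the substantive work, namely that strictness is preserved under removing relations, was already done in Proposition~\ref{prp:stict2strict}.
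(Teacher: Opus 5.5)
Your proof is correct and follows the paper's route: the paper also obtains the bound by applying Proposition~\ref{prp:stict2strict} with $\Pos'=(X,=)$, i.e.\ by removing all non-reflexive relations. Your monotonicity argument for groundsets with $|X|<n$ (via Proposition~\ref{prp:trivposet}, or by embedding $X$ into an $n$-element set) fills in a step the paper leaves implicit.
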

One quirk worth addressing is that this means that the associated hypergraphs of $\Pos_0$ have the most possible hyperedges, yet have chromatic number of only 2, as that is the order dimension of this very simple poset. Essentially, the small chromatic number is no obstacle to having many hyperedges simply because the associated hypergraphs to $\Pos_0$ have so many more vertices, as every pair of differing elements is both incomparable and critical.  

\section{Asymptotics of $|\mathcal{AC}^s(\Pos_0)|$ and $\mathcal{MAC}^s(\Pos)$}

It has been shown that the poset $\Pos_0=(X,=)$ has the maximum alternating cycles possible for posets on a ground set of size $|X|\leq n$. It bears examining how this quantity grows as $n$ does. Inspection of  Proposition \ref{prp:trivposet} will quickly reveal that the factor of $(n-1)!$ arising from the $k=n$ term in the sum will be dominant as $n$ grows toward $\infty$, but more can be said. Once again writing $|\mathcal{AC}^s\Pos_0|$ as $n!g(n)=n!\sum_{k=0}^{n-2} \frac{1}{k!}\frac{1}{n-k}$, asymptotics of $g(n)$ can be established as follows:
\begin{equation}
g(n) = 
\sum_{k=0}^{n-2}\frac{1}{k!}\frac{1}{n-k} = 
\frac{1}{n}\sum_{k=0}^{n-2}\frac{1}{k!}\left(1 + \frac{k/n}{1-k/n}\right) = 
\frac{1}{n}\left[\sum_{k=0}^{n-2}\frac{1}{k!} +
  \frac{1}{n}\sum_{k=1}^{n-2}\frac{1}{(k-1)!}\frac{n}{n-k}\right].
\end{equation}
Making use of the fact that $e = \sum_{k=0}^{\infty}\frac{1}{k!}$ we can obtain:
\begin{equation}
\label{eqn:2sum}
g(n) = \frac{1}{n}\left[\left(e - \sum_{k=n-1}^{\infty}\frac{1}{k!}\right)
  + \frac{1}{n}\sum_{k=1}^{n-2}\frac{1}{(k-1)!}\frac{n}{n-k}\right]
= \frac{1}{n}\left[\left(e - \Sigma_1(n)\right) + \frac{1}{n}\Sigma_2(n)\right].
\end{equation}
Here $\Sigma_1(n)$ and $\Sigma_2(n)$ are given by:
$$\Sigma_1(n) = \sum_{k=n-1}^{\infty}\frac{1}{k!}~~ \text{and}~~
\Sigma_2(n) = \sum_{k=1}^{n-2}\frac{1}{(k-1)!}\frac{n}{n-k}.$$
Now, for $a,b$ positive integers with $0<a\leq b$, it holds that
\begin{equation}
\label{eqn:e-fact}
\sum_{k=a}^b\frac{1}{k!} < \frac{1}{a!}\sum_{k=0}^{\infty}\frac{1}{(a+1)^k}
= \frac{1}{a!}\frac{a+1}{a}< \frac{2}{a!}, 
\end{equation}
and therefore $\Sigma_1(n)$ can be bounded from above and below by 
\begin{equation}
\label{eqn:sum1}
\frac{1}{(n-1)!} < \Sigma_1(n) < \frac{2}{(n-1)!}.
\end{equation}
Next, by splitting $\Sigma_2(n)$ of (\ref{eqn:2sum})
at midpoint $m = \lfloor n/2\rfloor$ into two sums, 
by noting that $\frac{n}{n-k} > 1$ for $k\leq m$, it holds that:
\begin{eqnarray*}
\Sigma_2(n) & = & \sum_{k=1}^{n-2}\frac{1}{(k-1)!}\frac{n}{n-k} \\
& = & \sum_{k=1}^{m}\frac{1}{(k-1)!}\frac{n}{n-k} +
\sum_{k=m+1}^{n-2}\frac{1}{(k-1)!}\frac{n}{n-k} \\
  & > & \sum_{k=1}^m\frac{1}{(k-1)!}  + \frac{1}{m!} \\
  & > & 2 + \frac{1}{(\lfloor n/2\rfloor)!}
\end{eqnarray*}
as long as $m\geq 2$ and therefore $n\geq 4$. 
Subsequently, by noting that $n/(n-k) < 2$
when $k\in\{1,\ldots,m\}$ 
and $n/(n-k) < n/2$ for $k\in\{m+1,\ldots,n-2\}$ and utilizing
(\ref{eqn:e-fact}), it follows that 
\begin{eqnarray*}
\Sigma_2(n) & = & \sum_{k=1}^{n-2}\frac{1}{(k-1)!}\frac{n}{n-k} \nonumber \\
& = & \sum_{k=1}^{m}\frac{1}{(k-1)!}\frac{n}{n-k} +
\sum_{k=m+1}^{n-2}\frac{1}{(k-1)!}\frac{n}{n-k} \nonumber \\
  & < & 2e + \frac{n}{2}\sum_{k=m+1}^{\infty}\frac{1}{(k-1)!} \nonumber \\
  & < & 2e + \frac{n}{\lfloor n/2\rfloor!}.
\end{eqnarray*}
Combining these two inequalities on $\Sigma_2(n)$ yields the bounds:
\begin{equation}
\label{eqn:sum2}
2 + \frac{1}{\lfloor n/2\rfloor!} < \Sigma_2(n) <
2e + \frac{n}{\lfloor n/2\rfloor!}.
\end{equation}
By (\ref{eqn:2sum}), (\ref{eqn:sum1}), and (\ref{eqn:sum2}),
the following concrete bounds for 
$g(n)$  are obtained:
\begin{equation}
\label{eqn:gn-concr}  
\frac{e}{n} + \frac{2}{n^2} + \frac{1}{n^2\lfloor n/2\rfloor!} - \frac{2}{n!} 
< g(n) < 
\frac{e}{n} + \frac{2e}{n^2} + \frac{1}{n\lfloor n/2\rfloor!} - \frac{1}{n!}.
\end{equation}
Observe that as $n \to \infty$, the dominant term of both upper and lower bounds is $\frac{e}{n}$, and therefore asymptotically $g(n) \sim \frac{e}{n}$. Thus
making use of Proposition \ref{prp:trivposet}, Proposition \ref{prp:max-trivial}, and (\ref{eqn:gn-concr}) gives the following:
\begin{proposition}
\label{prp:alt-cycles-trivial}
The trivial poset ${\Pos}_0 = (X,=),$ on $|X| = n$ vertices, satisfies
$$
|{\cal{AC}}^s({\Pos_0})| = \sum_{k=2}^n\binom{n}{k}(k-1)!=n!g(n) \sim e(n-1)!$$
as $n$ tends toward infinity. Therefore, for large $n$, the maximum number of alternating cycles of any poset on $X$ with $|X|\leq n$ satisfies:
$${\cal{MAC}}^s(n) = |{\cal{AC}}^s({\Pos}_0)|\sim e(n-1)!~.$$
\end{proposition}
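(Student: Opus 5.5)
The statement is a packaging of three facts already in hand: the exact count from Proposition~\ref{prp:trivposet}, the extremality from Proposition~\ref{prp:max-trivial}, and the two-sided estimate (\ref{eqn:gn-concr}) on $g(n)$. The plan is to combine them in that order, with the asymptotic step the only one needing any argument.

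\textbf{Step 1: the exact formula.} I would first recall that $|\mathcal{AC}^s(\Pos_0)| = \sum_{k=2}^n\binom{n}{k}(k-1)!$. In $\Pos_0$ a strict alternating cycle of length $k$ is a choice of $k$ distinct elements together with a cyclic order on them, and there are $(k-1)!$ cyclic orders. Writing $\binom{n}{k}(k-1)! = \frac{n!}{(n-k)!\,k}$ and reindexing by $j = n-k$ turns the sum into
\[
n!\sum_{j=0}^{n-2}\frac{1}{j!}\frac{1}{n-j} = n!\,g(n),
\]
which is exactly Proposition~\ref{prp:trivposet}, so this step can simply be cited.

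\textbf{Step 2: the asymptotic $g(n)\sim e/n$.} I would multiply (\ref{eqn:gn-concr}) through by $n$. This gives
\[
e + \frac{2}{n} + \frac{1}{n\lfloor n/2\rfloor!} - \frac{2n}{n!} \;<\; n\,g(n) \;<\; e + \frac{2e}{n} + \frac{1}{\lfloor n/2\rfloor!} - \frac{n}{n!}.
\]
Every term other than $e$ tends to $0$ as $n\to\infty$: $\lfloor n/2\rfloor!\to\infty$ and $n/n! = 1/(n-1)!\to 0$. By the squeeze theorem $n\,g(n)\to e$. Then
\[
n!\,g(n) = (n-1)!\cdot n\,g(n) \sim e\,(n-1)!,
\]
which is the first displayed claim.

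\textbf{Step 3: the maximum.} Proposition~\ref{prp:max-trivial} gives $\mathcal{MAC}^s(n) = |\mathcal{AC}^s(\Pos_0)|$ with $|X| = n$. Substituting Step~2 yields $\mathcal{MAC}^s(n)\sim e(n-1)!$.

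\textbf{Expected obstacle.} The only point needing care is the validity of (\ref{eqn:gn-concr}), since the lower bound on $\Sigma_2(n)$ was derived only for $n\ge 4$. I would note that an asymptotic statement only needs the bounds for sufficiently large $n$, so this restriction causes no difficulty.
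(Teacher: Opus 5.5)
Your proposal is correct and follows the paper's own argument: the paper also reads off $g(n)\sim e/n$ from the two-sided bound (\ref{eqn:gn-concr}) and then combines it with Propositions~\ref{prp:trivposet} and~\ref{prp:max-trivial}. Your step of multiplying by $n$ and applying the squeeze theorem is just an explicit version of the paper's remark that $e/n$ is the dominant term of both bounds, and your caveat about $n\ge 4$ matches the restriction the paper itself states.
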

This is not quite as elegant as one might hope: in a perfect world it might be that $\mathcal{MAC}^s(n) \leq e(n-1)!$ would hold, but that is not the case. In this reality $e(n-1)!$ is not an upper bound, but rather a tight lower bound on the upper bound. Some values of $e(n-1)!$ in comparison to $\mathcal{MAC}^s(n)$  are included in Table \ref{table:1}. As that table illustrates (and is clear from the equations), the number of possible strict alternating cycles on a ground set of order $n$ is enormous, but at least the bounding case of the trivial poset $\Pos_0$ is an uncommon case. Posets arising from scenarios of interest tend to have many more relations, even if they remain sparse, and in non-trivial cases we will next see that a tighter bound can be obtained.

\section{Dilworth's Theorem}

While understanding the bounding case of $\Pos_0$ is useful, posets of interest tend to have many, many more relations than simply the reflexive ones. It is natural, then, to want to have bounds that can be adapted to more parameters than simply the cardinality of the ground set. The titular theorem for this section, originally from \cite{Dilworth}, will be a useful tool. 

Some conventions: for $\Pos = (X,\leq)$, we say that $\Pos$ is \textit{decomposed into chains} or is a \textit{union of chains} if there exists a set partition $X=X_1\cup \dots \cup X_n$ such that each $\chain_i=\Pos[X_i]$ is a chain.
Here $\chain_i$ will be used to denote chains as posets, where $X_i$ will denote underlying sets of chains, which may in the poset have relations outside themselves. This distinction is made because:
\begin{theorem}[Dilworth's Theorem]
    Any finite poset $\Pos=(X,\leq)$ can be written as  a union of $w$ disjoint chains, where $w$ is the width of $\Pos$.
\end{theorem}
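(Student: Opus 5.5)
We argue by strong induction on $n=|X|$, using Dilworth's theorem in its equivalent min--max form: the minimum number of chains needed to partition $X$ equals the width $w$. First we note that a partition into $k<w$ chains is impossible. A maximum antichain $A$ with $|A|=w$ would then have two of its elements in the same chain, and those two elements would be comparable. So it suffices to produce a partition into at most $w$ chains. For $n=1$ this is immediate, since a single element forms a chain and $w=1$.

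For the inductive step, let $\Pos$ have width $w$. Pick a maximal element $m$ and consider $\Pos[X\setminus\{m\}]$. Its width is some $w'\le w$, and by induction it is partitioned into $w'$ chains $\chain_1,\dots,\chain_{w'}$. If $w'<w$, then adjoining $\{m\}$ as its own chain gives $w'+1\le w$ chains, and we are done. The substantial case is $w'=w$. Here the plan follows the standard (Perles/Galvin-style) argument.

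For each $i$, let $a_i$ be the largest element of $\chain_i$ that belongs to some maximum antichain ($w$-element antichain) of $\Pos[X\setminus\{m\}]$. Such an element exists because every maximum antichain meets every chain exactly once.

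\textbf{Key claim.} The set $A=\{a_1,\dots,a_w\}$ is itself an antichain. Suppose $a_i<a_j$. Take a maximum antichain $B$ containing $a_j$, and let $b_i$ be its element in $\chain_i$. The maximality of $a_i$ gives $b_i\le a_i$, so $b_i\le a_i<a_j$. This contradicts that $b_i$ and $a_j$ both lie in the antichain $B$.

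Now $A\cup\{m\}$ cannot be an antichain, since it would have $w+1$ elements. Because $m$ is maximal, some $a_k$ satisfies $a_k<m$. Form the chain $K=\{m\}\cup\{x\in\chain_k : x\le a_k\}$, and look at $\Pos[X\setminus K]$. Its width is at most $w-1$. Indeed, any $w$-element antichain there would be a maximum antichain of $\Pos[X\setminus\{m\}]$ avoiding $\chain_k$ below and at $a_k$. It would therefore meet $\chain_k$ above $a_k$, contradicting the choice of $a_k$. By induction, $X\setminus K$ is then a union of at most $w-1$ chains, and together with $K$ this gives $w$ chains.

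The main obstacle is the careful verification of the key claim and of the width drop after removing $K$. The crux is the observation that every $w$-antichain meets each of the $w$ chains exactly once.
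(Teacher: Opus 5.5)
Your proof is correct and complete. The steps you flag as the crux all hold: the $a_i$ form a $w$-element antichain, since $b_i\le a_i<a_j$ would put two comparable elements in $B$. Maximality of $m$ together with width $w$ forces $a_k<m$ for some $k$. Any $w$-antichain of $X\setminus K$ would be a maximum antichain of $X\setminus\{m\}$ meeting $\chain_k$ strictly above $a_k$, which contradicts the choice of $a_k$.

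The paper itself gives no proof of this statement. It quotes the theorem from Dilworth's original paper and uses it only as a black box: it views a width-$w$ poset $\Pos$ as an extension of a disjoint union $\chain(\tilde n)$ of $w$ chains, so that Proposition~\ref{prp:stict2strict} yields Corollary~\ref{cor:tur-max-final}. The comparison is therefore between citing and proving. The citation keeps the paper focused on counting. Your argument makes that step self-contained.

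Your pigeonhole remark is what turns ``at most $w$'' chains into exactly $w$ nonempty chains. That is the form the paper needs, since its tuple $\tilde n$ has $w$ positive entries in Observation~\ref{obs:no-of-sac} and Proposition~\ref{prp:disc-opt}.

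One attribution point: the argument you give is Galvin's short induction. Perles' proof is a different induction, so ``Perles/Galvin-style'' slightly conflates two distinct proofs.
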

Note that multiple such coverings may be possible for a given poset, assuming $w \geq 2$. Alternating cycles fit nicely into these decompositions
for disjoint union of chains: to make a jump from $(x_1,y_1)$ to $(x_2,y_2)$, it must be the case that $y_1\leq x_2$, and therefore there must be a chain containing both $y_1$ and $x_2$. 
\begin{example}On the width-4 poset $\chain$ shown in Figure \ref{fig:chains}, one possible alternating cycle would be 
$[(a_1,b_2)(b_3,c_2)(c_2,d_2)(d_4,a_1)]$. 
\end{example} 
\begin{figure}[h!]
    \centering
    \begin{tikzpicture}
        \draw (-3,-1) -- (-3,0);
        \draw (-3,0) -- (-3,1);
        \draw (-1,-0.5) -- (-1,0.5);
        \draw (1,-1.5) -- (1,-0.5);
        \draw (1,-0.5) -- (1,0.5);
        \draw (1,0.5) -- (1,1.5);
         \filldraw[black] (-5,0) circle (2pt) node[anchor=north]{$a_1$};
        \filldraw[black] (-3,-1) circle (2pt) node[anchor=north]{$b_1$};
        \filldraw[black] (-3,0) circle (2pt) node[anchor=east]{$b_2$};
        \filldraw[black] (-3,1) circle (2pt) node[anchor=south]{$b_3$};
        \filldraw[black] (-1,-0.5) circle (2pt) node[anchor=north]{$c_1$};
        \filldraw[black] (-1,.5) circle (2pt) node[anchor=south]{$c_2$};
        \filldraw[black] (1,-1.5) circle (2pt) node[anchor=north]{$d_1$};
        \filldraw[black] (1,-0.5) circle (2pt) node[anchor=west]{$d_2$};
        \filldraw[black] (1,0.5) circle (2pt) node[anchor=west]{$d_3$};
        \filldraw[black] (1,1.5) circle (2pt) node[anchor=south]{$d_4$};
    \end{tikzpicture}
    \caption{A poset of four disjoint chains: $\Pos=\chain_1\cup \chain_2 \cup \chain_3 \cup \chain_4$}
    \label{fig:chains}
\end{figure}
Enumerating all the possible strict alternating cycles will require some care notationally. For a ground set $X$ with $|X| = n$ and a positive integer $w\in\nats$, let 
$\tilde{n} = (n_1,\ldots,n_w)$ be an ordered $w$-tuple of positive integers
that add up to $n$,
so $\sum_i n_i = n$. For a partition $X = X_1\cup\cdots\cup X_w$ with
$|X_i| = n_i$ for each $i$,
denote the poset consisting of $w$ disjoint chains $\chain_1,\ldots,\chain_w$ on
the sets $X_1,\ldots,X_w$ respectively by ${\chain}(\tilde{n})$. Assume a fixed labeling of the elements
in $X$ and a fixed total
order in each of
the chains $\chain_i$. Thus we write
${\chain}(\tilde{n}) = \chain_1\cup\cdots\cup \chain_w$ as a
poset. Let $S = [(x_1,y_1),(x_2,y_2),\ldots,(x_k,y_k)]$ 
be a strict alternating cycle in 
${\chain}(\tilde{n})$. In this case $y_i\leq x_{i+1}$ for each $i$,
cyclically modulo $k$, and hence
the pair $\{y_i,x_{i+1}\}$ is contained in the same chain $\chain_{i+1}$ among
$\chain_1,\ldots,\chain_k$ (cyclically) for each $i$.  
Suppose a chain $\chain_i$ contains two $x$-elements of the alternating
cycle $S$, $x_a$ and $x_b$.
If $x_a\leq x_b$, then $y_{b-1} \leq x_b \leq x_a$ and hence $S$ is not
strict. A similar conclusion
is reached if $x_b\leq x_a$ and so a chain $\chain_i$ cannot contain two
$x$-elements from $S$.
In the same way, if a chain $\chain_i$ contains two $y$-elements of $S$,  $y_a$ and
$y_b$, then if $y_a\leq y_b$
we have $y_a\leq y_b \leq x_{b+1}$ and hence $S$ would not be strict. The same argument holds if $y_b \leq y_a$. Therefore a chain $\chain_i$ of ${\chain}(\tilde{n})$ contains at
most one unordered pair 
$\{y_i,x_{i+1}\}$ of elements from the strict alternating cycle $S$. 
Since the
chain $\chain_i$ has $n_i\geq 1$
elements, there are $\binom{n_i + 1}{2}$ ordered pairs $(y,x)$ with
$x,y\in X_i$ and $y\leq x$. 
Since $S$ is strict, then $k\geq 2$ and the following
observation holds:
\begin{observation}
\label{obs:no-of-sac}
The number $|{\cal{AC}}^s(\chain(\tilde{n})|$ of strict alternating cycles
of the 
poset ${\chain}(\tilde{n}) = \chain_1\cup\cdots\cup \chain_w$ is given by
\[
|{\cal{AC}}^s({\chain}(\tilde{n}))| = 
\sum_{k = 2}^w\left(\sum_{I\in \binom{[w]}{k}}(k-1)!\prod_{i\in I}
\binom{n_i+1}{2}\right) =
\sum_{k = 2}^w(k-1)!\left(\sum_{I\in \binom{[w]}{k}}\prod_{i\in I}
\binom{n_i+1}{2}\right),
\]
where $[w] = \{1,\ldots,w\}$ and $\binom{[w]}{k}$ is the set of all
$k$-element subsets of $[w]$.
\end{observation}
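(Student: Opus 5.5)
I will prove the formula by exhibiting a bijection between $\mathcal{AC}^s(\chain(\tilde{n}))$ and the set of triples $(I,\sigma,(p_i)_{i\in I})$. Here $I\subseteq[w]$ has $|I|=k\geq 2$, $\sigma$ is a cyclic ordering of $I$ (there are $(k-1)!$ of these), and each $p_i=(y,x)$ is an ordered pair in $X_i$ with $y\leq x$ (there are $\binom{n_i+1}{2}$ choices per chain, counting $y=x$). Summing over $I$ and $k$ then gives exactly the displayed double sum, and the second equality is just pulling $(k-1)!$ out.

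\textbf{From cycles to data.} Given a strict alternating cycle $S=[(x_1,y_1),\ldots,(x_k,y_k)]$, each link $y_j\leq x_{j+1}$ forces $y_j,x_{j+1}$ into a common chain, since the chains are disjoint and mutually incomparable. The discussion preceding the observation shows that no chain contains two $x$-elements or two $y$-elements of $S$. Hence the $k$ link pairs $\{y_j,x_{j+1}\}$ lie in $k$ distinct chains, which gives the set $I$, and the cyclic order of the links gives $\sigma$. We also need $k\geq 2$. This holds because $k=1$ would mean $y_1\leq x_1$, contradicting $x_1\|y_1$.

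\textbf{From data to cycles.} Conversely, given $(I,\sigma,(p_i))$, label the chains in $\sigma$-order as $i_1,\ldots,i_k$ with $p_{i_j}=(y_{j-1},x_j)$, and form $S=[(x_1,y_1),\ldots,(x_k,y_k)]$. Here $x_j$ lies in chain $i_j$ and $y_j$ lies in chain $i_{j+1}$, indices cyclic. Then $x_j\|y_j$ because these are distinct chains, which uses $k\geq2$. Each link $y_j\leq x_{j+1}$ holds by the choice of $p$. Strictness holds because $y_j\leq x_m$ forces $x_m$ into chain $i_{j+1}$, and that chain contains only the single $x$-element $x_{j+1}$.

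\textbf{Checking the correspondence.} The two maps are mutually inverse, once we note that identifying cycles up to rotation matches identifying the sequence $i_1,\ldots,i_k$ up to rotation. The main point needing care is this identification: two strict cycles with the same data must be the same cyclic tuple, and rotations must not be overcounted. Since the chains in $I$ are distinct, a cyclic order on $I$ with assigned pairs determines the cyclic tuple uniquely, and the $k$ rotations of a tuple give the same $\sigma$. The case $y=x$ in $p_i$ is allowed, since links may be equalities, as in $\Pos_0$; this is why the count is $\binom{n_i+1}{2}$ rather than $\binom{n_i}{2}$.
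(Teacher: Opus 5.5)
Your proof is correct and follows the same route as the paper: the links $\{y_j,x_{j+1}\}$ of a strict cycle lie in $k\ge 2$ distinct chains, each chosen chain contributes one of its $\binom{n_i+1}{2}$ comparable pairs $(y,x)$, and the chosen chains are visited in one of $(k-1)!$ cyclic orders. Your write-up is more complete than the paper's, since you also check the converse (every such choice of data gives a strict alternating cycle) and the matching of rotations, both of which the paper leaves implicit.
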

Note that any poset $\Pos=(X,\leq)$ where $X=X_i \cup \dots \cup X_w$ that $\chain(\tilde{n})$ is covering according to Dilworth's Theorem will have fewer alternating cycles, as a consequence of Proposition \ref{prp:stict2strict}. However, this sum can be somewhat cumbersome. Mathematical intuition suggests that the upper bounding case will have uniform (or as close as possible) chain lengths: the next section is devoted to rigorously verifying that intuition.

\section{Tur\'{a}n Posets as the Upper Bound}

This is an optimization problem, and there are many tools available in this domain. That said, the nature of the function in Observation \ref{obs:no-of-sac} of $\tilde{n}=(n_1,\dots,n_w) \in \ints^n$ is ill-suited to techniques such as Lagrange multipliers or using Jensen's-like inequalities. 
What bore fruit was an iterative approach, but showing the desired result requires a somewhat sizable amount of descriptive but admittedly inelegant notation, which we now introduce.

Let $\tilde{x} = (x_1,x_2,x_3,\ldots)$ be a sequence of real variables and 
let $\tilde{a} = (a_0,a_1,a_2\ldots)$ be an increasing sequence of nonnegative real
numbers. For a nonnegative integer $w$ let
\begin{equation}
  \label{eqn:Sw}
S_{w;\tilde{a}}(\tilde{x}) = S_{w;a_0,\ldots,a_w}(x_1,\ldots,x_w) := 
\sum_{k = 0}^w a_k\left(\sum_{I\in \binom{[w]}{k}}\prod_{i\in I}
\binom{x_i+1}{2}\right).
\end{equation}
Note that here $\binom{x_i+1}{2}=\frac{x_i(x_i+1)}{2}$ for any real $x_i$.
By Observation \ref{obs:no-of-sac},  
$|{\cal{AC}}^s({\chain}(\tilde{n}))| = S_{w;\tilde{a}}(\tilde{n})$,
where $a_0 = a_1 = 0$ and $a_k = (k-1)!$ for $k\in\{2,\ldots,w\}$
and $\tilde{n} = (n_1,\ldots,n_w)$. Define the following shifted sums:
\begin{equation}
  \label{eqn:shift}
S_{w;\tilde{a}+}(\tilde{x})  := S_{w;a_1,\ldots,a_{w+1}}(x_1,\ldots,x_w), \ \ 
S_{w;\tilde{a}++}(\tilde{x}) := S_{w;a_2,\ldots,a_{w+2}}(x_1,\ldots,x_w).
\end{equation}
As before, $\binom{A}{k}$ denotes the set of all $k$-element subsets of the set $A$. 
For a set $B$ disjoint from $A$ let
$\binom{A}{k}\uplus B$ denote the set $\{C\cup B : C\in\binom{A}{k}\}$. 
Note that there is a partition of $\binom{[w]}{k}$ as
\begin{equation}
  \label{eqn:part3}
  \binom{[w-2]}{k}
  \cup \left[\binom{[w-2]}{k-1}\uplus \{w-1\}\right]
  \cup \left[\binom{[w-2]}{k-1}\uplus \{w\}\right]
  \cup \left[\binom{[w-2]}{k-2}\uplus \{w-1,w\}\right].
\end{equation}
Consider the $w$-tuple $(x_1,\ldots,x_w)$ and fix the variables
$x_1,\ldots,x_{w-2}$ but vary the last two $x_{w-1}$ and $x_w$, which for
convenience will be renamed (for now) $x$ and $y$ respectively. By (\ref{eqn:part3})
 rewrite $S_{w;\tilde{a}}(\tilde{x})$ from (\ref{eqn:Sw}) as
follows:
\begin{eqnarray*}
  S_{w;\tilde{a}}(\tilde{x})
  & = & \sum_{k = 0}^w a_k\left(\sum_{I\in \binom{[w]}{k}}\prod_{i\in I} 
  \binom{x_i+1}{2}\right) \\
  & = & \lefteqn{ \sum_{k = 0}^w a_k\left(
  \sum_{I\in \binom{[w-2]}{k}}\prod_{i\in I}\binom{x_i+1}{2} +
  \sum_{I\in \binom{[w-2]}{k-1}\uplus\{w-1\}}\prod_{i\in I}\binom{x_i+1}{2}
  \right. } \\
  &   & \left. + \sum_{I\in \binom{[w-2]}{k-1}\uplus\{w\}}\prod_{i\in I}
  \binom{x_i+1}{2} +
  \sum_{I\in \binom{[w-2]}{k-2}\uplus\{w-1,w\}}\prod_{i\in I}\binom{x_i+1}{2} 
  \right).
\end{eqnarray*}
By further factoring out the binomial
expressions in term of $x_{w-1} = x$ and $x_w = y$, it follows from the
above that
\begin{eqnarray*}
  S_{w;\tilde{a}}(\tilde{x})
  & = & \sum_{k = 0}^w a_k\left(
  \sum_{I\in \binom{[w-2]}{k}}\prod_{i\in I}\binom{x_i+1}{2}\right) \\
  &   & + \left(\binom{x+1}{2} + \binom{y+1}{2}\right)\sum_{k = 0}^w a_k\left(
  \sum_{I\in \binom{[w-2]}{k-1}}\prod_{i\in I}\binom{x_i+1}{2}\right) \\
  &   & + \binom{x+1}{2}\binom{y+1}{2}\sum_{k = 0}^w a_k\left(
  \sum_{I\in \binom{[w-2]}{k-2}}\prod_{i\in I}\binom{x_i+1}{2}\right).
\end{eqnarray*}
Since $\binom{[k-2]}{\ell} = 0$ whenever $\ell < 0$ or $\ell > w-2$,
 we obtain from the above that
\begin{eqnarray*}
  S_{w;\tilde{a}}(\tilde{x})
  & = & \sum_{k = 0}^{w-2} a_k\left(
  \sum_{I\in \binom{[w-2]}{k}}\prod_{i\in I}\binom{x_i+1}{2}\right) \\
  &   & + \left(\binom{x+1}{2} + \binom{y+1}{2}\right)
  \sum_{k = 0}^{w-2} a_{k+1}\left(
  \sum_{I\in \binom{[w-2]}{k}}\prod_{i\in I}\binom{x_i+1}{2}\right) \\
  &   & + \binom{x+1}{2}\binom{y+1}{2}\sum_{k = 0}^{w-2} a_{k+2}\left(
  \sum_{I\in \binom{[w-2]}{k}}\prod_{i\in I}\binom{x_i+1}{2}\right) \\
  & = & S_{w-2;\tilde{a}}(\tilde{x})
  + \left(\binom{x+1}{2} + \binom{y+1}{2}\right)S_{w-2;\tilde{a}+}(\tilde{x}) \\
  & & + \left(\binom{x+1}{2}\binom{y+1}{2}\right)S_{w-2;\tilde{a}++}(\tilde{x}).
\end{eqnarray*}
Since $a_i\geq a_{i-1}$ and $x_i\geq 1$ for each $i$, then  clearly 
\[
S_{w-2;\tilde{a}}(\tilde{x})  \leq
S_{w-2;\tilde{a}+}(\tilde{x}) \leq
S_{w-2;\tilde{a}++}(\tilde{x}).
\]
Since $S_{w;\tilde{a}}(\tilde{x})$ is a symmetric polynomial in $x$ and $y$
for fixed $x_1,\ldots,x_{w-2}$, it can be written  as a quadratic
polynomial in terms of $C = x+y$ and $xy$:
\begin{eqnarray}
  S_{w;\tilde{a}}(\tilde{x})
    & = & \frac{S_{w-2;\tilde{a}++}(\tilde{x})}{4}(xy)^2 +
    \left(\frac{S_{w-2;\tilde{a}++}(\tilde{x})}{4}(C+1)
    - S_{w-2;\tilde{a}+}(\tilde{x})\right)xy \nonumber \\
    &   & + S_{w-2;\tilde{a}}(\tilde{x})
    + \frac{S_{w-2;\tilde{a}+}(\tilde{x})}{2}C(C+1) \nonumber \\
    & = & \frac{S_{w-2;\tilde{a}++}(\tilde{x})}{4}\left(xy + \frac{C+1 -
      4S_{w-2;\tilde{a}+}(\tilde{x})/S_{w-2;\tilde{a}++}(\tilde{x})}{2}\right)^2
    \nonumber \\
    &   & + \gamma(S_{w-2;\tilde{a}}(\tilde{x}),S_{w-2;\tilde{a}+}(\tilde{x}),
    S_{w-2;\tilde{a}++}(\tilde{x}),C), \label{eqn:S-clear-max}
\end{eqnarray}
where $\gamma$ is an algebraic rational expression in terms of
$S_{w-2;\tilde{a}}(\tilde{x}),~S_{w-2;\tilde{a}+}(\tilde{x}),~
S_{w-2;\tilde{a}++}(\tilde{x}),$ and $C$. As messy as this is, it amounts to completing the square.
Note that for all $x,y\geq 1$, we have that $C\geq 2$ and so
$$xy + \frac{C+1 - 4S_{w-2;\tilde{a}+}(\tilde{x})/S_{w-2;\tilde{a}++}(\tilde{x})}{2}
\geq \frac{1}{2} > 0.$$ Clearly, when $C=x+y$ fixed, then
$xy + \frac{C+1 - 4S_{w-2;\tilde{a}+}(\tilde{x})/S_{w-2;\tilde{a}++}
  (\tilde{x})}{2}$ is, by AM-GM inequality,
maximized exactly when $x = y = C/2$. Moreover, replacing each $x_i$ with
an integer $n_i\geq 1$ for each $i\in\{1,\ldots,w\}$, it can be seen from
(\ref{eqn:S-clear-max}) that if $n_{w-1}+1\leq n_w$, we
 replace $(x,y) = (n_{w-1},n_w)$ by
$(n_{w-1}',n_w') = (n_{w-1}+1,n_w-1)$ and then $S_{w;\tilde{a}}(\tilde{n})$
increases, simply since $C = n_{w-1}+n_w = n_{w-1}' + n_w'$ remains unaltered and
\[
n_{w-1}'n_w' = (n_{w-1}+1)(n_w-1) = n_{w-1}n_w + n_w - n_{w-1} -1 \geq n_{w-1}n_w.
\]
By symmetry of the variables $x_1,\ldots,x_w$, we therefore obtain the following:
\begin{proposition}
  \label{prp:disc-opt}
  Let $w\geq 2$ and $\tilde{a}$ satisfy $a_{k-1}\leq a_k$ for each
  $k\in\{1,\ldots,w\}$. If $n_1,\ldots,n_w\in{\nats}$ are natural numbers
  and $\sum_in_i = n$, then the maximum value of
  $S_{w;\tilde{a}}(\tilde{n})$ from
  (\ref{eqn:Sw}) is obtained when for each $i,j\in \{1,\ldots,w\}$
  we have $|n_i - n_j|\in \{0,1\}$. In particular, for a fixed $n,w\in{\nats}$
  the $w$-tuple $\tilde{n}$ yielding the maximum value of 
  $S_{w;\tilde{a}}(\tilde{n})$ for integers $n_i\geq 1$ is unique up to reordering.
\end{proposition}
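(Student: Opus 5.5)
The plan is a discrete "smoothing" argument built on the decomposition of $S_{w;\tilde{a}}(\tilde{x})$ derived just above, with symmetry of $S_{w;\tilde{a}}$ in its variables finishing the job. Since there are only finitely many $w$-tuples $\tilde{n}$ of positive integers summing to $n$, a maximizer exists. I will show that any tuple with some $|n_i-n_j|\geq 2$ can be strictly improved. It follows that every maximizer is balanced, i.e.\ $|n_i-n_j|\in\{0,1\}$ for all $i,j$. A balanced tuple is determined up to order: writing $n=qw+r$ with $0\le r<w$, it must consist of $r$ entries $q+1$ and $w-r$ entries $q$. Because $S_{w;\tilde{a}}$ is symmetric in $x_1,\dots,x_w$, all reorderings give the same value. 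This yields both the location of the maximum and its uniqueness up to reordering.

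For the improvement step, take $n_j\geq n_i+2$ and use symmetry to move these entries to positions $w-1$ and $w$, calling them $x=n_i$ and $y=n_j$. Put $A=S_{w-2;\tilde{a}}$, $B=S_{w-2;\tilde{a}+}$ and $D=S_{w-2;\tilde{a}++}$, all evaluated at the other $w-2$ entries, so that
\[
S_{w;\tilde{a}}=A+\Bigl(\tbinom{x+1}{2}+\tbinom{y+1}{2}\Bigr)B+\tbinom{x+1}{2}\tbinom{y+1}{2}D .
\]
With $C=x+y$ and $p=xy$, direct expansion gives $\binom{x+1}{2}+\binom{y+1}{2}=\tfrac12(C^2+C)-p$ and $\binom{x+1}{2}\binom{y+1}{2}=\tfrac14 p(p+C+1)$. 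Hence, for fixed $C$,
\[
S_{w;\tilde{a}}=f(p):=\text{const}-Bp+\tfrac{D}{4}p(p+C+1).
\]
Replacing $(x,y)$ by $(x+1,y-1)$ keeps $C$ fixed and changes $p$ to $p'=p+(y-x-1)>p$. Rather than arguing through the completed square, I will compute the difference directly:
\[
f(p')-f(p)=(p'-p)\Bigl[\tfrac{D}{4}(p+p'+C+1)-B\Bigr].
\]
Since $x\ge1$ and $y\ge x+2\ge 3$, we have $p\ge3$, $p'\ge4$ and $C\ge4$. So the bracket is at least $3D-B$.

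The only real obstacle is confirming that this bracket is strictly positive. I will use the inequality $B\le D$, which the paper notes follows from monotonicity of $\tilde{a}$ (comparing coefficients $a_{k+1}\le a_{k+2}$ termwise against nonnegative products). I also need $D>0$. This holds as soon as some $a_k$ with $2\le k\le w$ is positive, because then the term of $D$ indexed by a suitable subset is a positive product. It is certainly true in the case of interest, where $a_k=(k-1)!$, since then the $I=\emptyset$ term of $D$ is $a_2=1$. I will add this nondegeneracy as a standing remark; otherwise $S$ is essentially affine in the $\binom{x_i+1}{2}$ and the claimed uniqueness could fail. With $D>0$ and $B\le D$ we get $3D-B\ge 2D>0$, so the move strictly increases $S_{w;\tilde{a}}(\tilde{n})$. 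This completes the argument, and the boundary case $w=2$ needs nothing extra, since then $A,B,D$ are just the constants $a_0,a_1,a_2$.
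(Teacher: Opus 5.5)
Your proposal is correct and follows essentially the paper's route: the same splitting $S_{w;\tilde{a}}=A+\bigl(\binom{x+1}{2}+\binom{y+1}{2}\bigr)B+\binom{x+1}{2}\binom{y+1}{2}D$, the same reparametrization by $C=x+y$ and $xy$, the same $(x,y)\mapsto(x+1,y-1)$ smoothing justified by $B\le D$, and symmetry to finish. You compute the finite difference directly where the paper completes the square, and in doing so you supply what the paper glosses over for the uniqueness clause: a strict increase when $|n_i-n_j|\ge 2$, the existence of a maximizer, and the requirement $D>0$ (which, for $\tilde a$ nonnegative and nondecreasing, fails only in the trivial case $S\equiv 0$, where uniqueness is indeed false).
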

In demonstrating this, similarities are apparent to one way of proving  Tur\'{a}n's Theorem (the fifth proof in \cite{THEBOOK}), where cliques of differing sizes are made as uniform as possible,  proceeding one element at a time. A \emph{Tur\'{a}n graph} $T_k(n)$ is a complete $k$-partite graph on $n$ vertices where each anticlique (or independent set of vertices) has order as close to $\frac{n}{k}$ as possible \cite{agnarsson2006graph}. Tur\'{a}n's Theorem states that this graph has the maximal number of edges of any graph that does not contain $K_{n+1}$, the complete graph on $n+1$ vertices, as a subgraph. Figure \ref{fig:t73} depicts the Tur\'{a}n graph $T_3(7)$ on seven vertices with 16 edges, the maximum number of edges among all simple graphs on seven vertices that do not contain $K_4$ as a subgraph.
\begin{figure}[h!]
    \centering
    \begin{tikzpicture}[every  node/.style={circle,draw}]
        \node (A) at (0, 1) {$A$};
        \node (B) at (2.5, 5) {$B$};
        \node (C) at (5, 0) {$C$};
        \node (D) at (.75, 0) {$D$};
        \node (E) at (3.5, 5) {$E$};
        \node (F) at (5.5, 0.6) {$F$};
        \node (G) at (6,1.2) {$G$};
        \draw [-] (A) edge (B) (A) edge (C) (A) edge (E) (B) edge (C) (B) edge (D) (B) edge (F) (C) edge (D) (C) edge (E) (D) edge (E) (D) edge (F) (E) edge (F) (F) edge (A) (G) edge (E) (G) edge (B) (G) edge (A) (G) edge (D);
    \end{tikzpicture}
    \caption{A Tur\'{a}n graph on 7 vertices, showing the maximum number of 16 edges among simple graphs on 7 vertices without containing $K_4$ as a subgraph}
    \label{fig:t73}
\end{figure}
Motivated by this similarity, we present the following definition:
\begin{definition}
    \label{def:tur-pos}
     A \emph{Tur\'{a}n poset} on $n$ elements with width $w$, denoted $\Turan(n,w)$, consists of $w$ disjoint chains of length as close as possible to $\frac{n}{w}$. If $w \equiv r \mod n$, then $r$ of the chains are of length $\floor{\frac{n}{w}}+1$ and the remainder have length $\floor{\frac{n}{w}}$. 
\end{definition}
Then, in terms of this definition, and by Proposition~\ref{prp:disc-opt}, we obtain the following corollary:
\begin{corollary}
    \label{cor:tur-max-inter}
     A poset of cardinality $n$ consisting of $w$ disjoint chains with the maximal number of possible strict alternating cycles must be a Tur\'{a}n poset. 
\end{corollary}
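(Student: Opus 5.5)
The corollary follows by specializing Proposition~\ref{prp:disc-opt} to the weight sequence of Observation~\ref{obs:no-of-sac}; there are three things to check.

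First, fix $n$ and $w$, and let $\chain(\tilde{n}) = \chain_1\cup\cdots\cup\chain_w$ be a poset of $w$ disjoint chains with $|X_i| = n_i\geq 1$ and $\sum_i n_i = n$. The disjoint chains are pairwise incomparable, so this poset has width exactly $w$. By Observation~\ref{obs:no-of-sac}, $|{\cal{AC}}^s(\chain(\tilde{n}))| = S_{w;\tilde{a}}(\tilde{n})$ with $a_0=a_1=0$ and $a_k=(k-1)!$ for $2\leq k\leq w$. This sequence is nondecreasing, since $a_1 = 0\leq a_2 = 1$ and $(k-2)!\leq (k-1)!$ for $k\geq 2$. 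So the hypothesis $a_{k-1}\leq a_k$ of Proposition~\ref{prp:disc-opt} holds.

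Second, Proposition~\ref{prp:disc-opt} says that over all $w$-tuples of positive integers summing to $n$, the maximum of $S_{w;\tilde{a}}$ is attained exactly at the tuples with $|n_i-n_j|\in\{0,1\}$ for all $i,j$. These are unique up to reordering: $r$ coordinates equal $\floor{n/w}+1$ and $w-r$ equal $\floor{n/w}$, where $r\equiv n \pmod w$. This is precisely the chain-size profile of $\Turan(n,w)$ in Definition~\ref{def:tur-pos}. For this I read ``length'' there as the number of elements, and the congruence as $n\equiv r \pmod w$.

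Third, the isomorphism type of $\chain(\tilde{n})$ depends only on the multiset $\{n_i\}$, and $S_{w;\tilde{a}}$ is symmetric in its variables. Hence any maximizer is isomorphic to $\Turan(n,w)$, which is the claim.

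The main obstacle is the word ``exactly'' in the second step. We need strict improvement whenever some $n_j\geq n_i+2$, not merely non-decrease. Otherwise a non-balanced tuple could tie the maximum and the conclusion ``must be'' would fail. The balancing move $(n_i,n_j)\mapsto(n_i+1,n_j-1)$ strictly increases $xy$, by $n_j-n_i-1\geq 1$. From the completed square (\ref{eqn:S-clear-max}), $S$ then strictly increases provided $S_{w-2;\tilde{a}++}(\tilde{n})>0$, because the bracketed quantity is positive and $\gamma$ depends only on $C$ and the fixed coordinates. With $a_2=1$, $S_{w-2;\tilde{a}++}$ contains the $k=0$ term $a_2\cdot 1 = 1>0$, so it is positive. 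This includes the case $w=2$, where it is just $a_2=1$.

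For a self-contained check I would bypass the completed square and compute the change directly. From the decomposition $S = S_{w-2;\tilde{a}} + (\binom{x+1}{2}+\binom{y+1}{2})S_{w-2;\tilde{a}+} + \binom{x+1}{2}\binom{y+1}{2}S_{w-2;\tilde{a}++}$, the sum $\binom{x+1}{2}+\binom{y+1}{2}$ decreases by $y-x-1$ under the move, while the product increases. I would then verify that the gain in the product term, weighted by $S_{w-2;\tilde{a}++}$, dominates the loss weighted by $S_{w-2;\tilde{a}+}$. That in turn uses $S_{w-2;\tilde{a}+}\leq S_{w-2;\tilde{a}++}$ together with the fact that the product gain is at least $x+1$ times the sum loss when $x\geq 1$. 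With strictness settled, every non-Turán profile is strictly beaten and the corollary follows.
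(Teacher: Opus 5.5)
Your proposal is correct and takes the same route as the paper. The paper obtains the corollary directly from Proposition~\ref{prp:disc-opt}, applied to the weights $a_0=a_1=0$, $a_k=(k-1)!$ of Observation~\ref{obs:no-of-sac}. Your strictness check is a useful addition, since the paper only says $S_{w;\tilde{a}}$ ``increases'' yet still claims uniqueness of the maximizer up to reordering. Either of your arguments establishes that the balancing move gives a strict increase: $S_{w-2;\tilde{a}++}\geq a_2=1>0$ together with the completed square, or the direct gain $\frac{(x+1)y}{2}(y-x-1)$ in the product term. Your readings of ``length'' as the number of elements and of the congruence as $n\equiv r \pmod w$ in Definition~\ref{def:tur-pos} are the intended ones.
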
 
For a general poset $\Pos$, one can use Dilworth's Theorem \cite{Dilworth} to decompose into $w$ disjoint chains. In this presentation, $\Pos$ can be thought of as a disjoint union of chains $\chain(\tilde{n})=\chain_1 \cup \dots \cup \chain_w$, with the addition of any relations in $\Pos$ between elements in different chains. Thus, we can apply Proposition~\ref{prp:stict2strict} to the order preservation from 
$\chain(\tilde{n})=\chain_1 \cup \dots \cup \chain_w$ to $\Pos$ to get $|\mathcal{AC}^s(\Pos)|\leq |\mathcal{AC}^s(\chain(\tilde{n})|$.
\begin{corollary}
    \label{cor:tur-max-final}
    Consider $\Pos$ a poset of finite cardinality $n$ and fixed width $w$. If $\Turan(n,w)$ denotes a Tur\'{a}n poset of cardinality $n$ and width $w$, then $|\mathcal{AC}^s(\Pos)|\leq|\mathcal{AC}^s(\Turan(n,w))|.$
\end{corollary}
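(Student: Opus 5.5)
The plan is to chain three earlier results: Dilworth's Theorem, the monotonicity of Proposition~\ref{prp:stict2strict}, and the optimization in Proposition~\ref{prp:disc-opt} (in the form of Corollary~\ref{cor:tur-max-inter}).

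\emph{Step 1: reduce to disjoint chains.} Let $\Pos=(X,\leq)$ have $|X|=n$ and width $w$. By Dilworth's Theorem, partition $X=X_1\cup\cdots\cup X_w$ into $w$ nonempty chains, with $n_i=|X_i|\geq 1$ and $\sum_i n_i=n$. Consider the poset $\chain(\tilde{n})=\chain_1\cup\cdots\cup\chain_w$ on the same ground set, in which each $X_i$ carries the order induced from $\Pos$ and there are no relations between distinct chains. Every relation of $\chain(\tilde{n})$ is a relation of $\Pos$, so $\chain(\tilde{n})\subseteq\Pos$ and the identity map $X\to X$ is order preserving from $\chain(\tilde{n})$ to $\Pos$. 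Proposition~\ref{prp:stict2strict} therefore gives an injection $\mathcal{AC}^s(\Pos)\hookrightarrow\mathcal{AC}^s(\chain(\tilde{n}))$, and hence $|\mathcal{AC}^s(\Pos)|\leq|\mathcal{AC}^s(\chain(\tilde{n}))|$.

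\emph{Step 2: optimize over chain sizes.} By Observation~\ref{obs:no-of-sac}, $|\mathcal{AC}^s(\chain(\tilde{n}))|=S_{w;\tilde{a}}(\tilde{n})$ with $a_0=a_1=0$ and $a_k=(k-1)!$ for $2\leq k\leq w$. This sequence is nondecreasing, so Proposition~\ref{prp:disc-opt} applies. Among all $w$-tuples of positive integers summing to $n$, the maximum of $S_{w;\tilde{a}}$ is attained exactly when all $|n_i-n_j|\leq 1$, and up to reordering this tuple is the chain-size profile of $\Turan(n,w)$. Since the count in Observation~\ref{obs:no-of-sac} depends only on the multiset of chain sizes, we get $S_{w;\tilde{a}}(\tilde{n})\leq|\mathcal{AC}^s(\Turan(n,w))|$. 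Combining this with Step 1 gives the corollary.

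\emph{Expected obstacle.} The main point needing care is that Proposition~\ref{prp:disc-opt} is a global statement over a finite set, while its argument only shows that a single balancing move $(n_{w-1},n_w)\mapsto(n_{w-1}+1,n_w-1)$ with $n_{w-1}+1\leq n_w$ does not decrease $S$. I would make the global claim rigorous with a potential-function argument. Starting from any $\tilde{n}$, repeatedly apply a balancing move to a pair with $n_j\geq n_i+2$; by symmetry of $S$ in its variables, any pair can play the role of the last two coordinates. Each such move strictly decreases $\sum_i n_i^2$ and keeps every $n_i\geq 1$, so the process terminates, and it can only stop at a balanced tuple. Along the way $S$ never decreases, which yields the required inequality. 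There is one degenerate case, $w=1$: then $\Pos$ is a chain with no incomparable pairs, both sides are $0$, and the statement holds trivially.
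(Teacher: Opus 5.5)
Your proposal is correct and follows the paper's own route: a Dilworth decomposition into $w$ chains, the injection of Proposition~\ref{prp:stict2strict} from $\mathcal{AC}^s(\Pos)$ into $\mathcal{AC}^s(\chain(\tilde{n}))$, and then the balancing result of Proposition~\ref{prp:disc-opt} in the form of Corollary~\ref{cor:tur-max-inter}. The potential-function argument (each move strictly decreases $\sum_i n_i^2$) and the separate treatment of $w=1$ make explicit two points the paper leaves implicit: why the single balancing move yields a global maximum, and the edge case $w=1$ that Proposition~\ref{prp:disc-opt} does not cover.
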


\section{Quantifying the Bound}

Recall the usual norm on the $w$-dimensional Euclidean space ${\reals}^w$
given by $$\|\tilde{x}\| = \sqrt{x_1^2 + \cdots + x_w^2}$$
where $\tilde{x} = (x_1,\ldots,x_w)\in {\reals}^w,$ and the accompanying
metric $d(\tilde{x},\tilde{y}) = \|\tilde{x} - \tilde{y}\|$. Also recall
that for any continuous function $f :{\reals}^w \rightarrow {\reals}$
and a convergent sequence $(\tilde{x}_n)_{n\geq 1}$ in $\reals^w$ with
$\tilde{x}_n \rightarrow \tilde{x}^*$ as $n\rightarrow\infty$, we have that 
$f(\tilde{x}_n)\rightarrow f(\tilde{x}^*)$ as $n\rightarrow\infty$.

Suppose $\tilde{x} = (x_1,\ldots,x_w) \in{\reals}^w$ with $x_i\geq 1$ for each $i$ and let
\begin{eqnarray*}    
\max(\tilde{x}) & :=&\max\{x_i:1\leq i \leq w\} \\
\min(\tilde{x})&:=&\min\{x_i:1\leq i \leq w\}\\
\extr(\tilde{x})& :=& \max(\tilde{x}) - \min(\tilde{x}). 
\end{eqnarray*} 
Assume further $1\leq x_1\leq\cdots\leq x_w$ and so $\extr(\tilde{x}) = x_w - x_1$.
Form another ordered tuple $\tilde{x}'$ from $\tilde{x}$ by
(i) first replacing $(x_i,x_{w-i+1})$ with $((x_i+x_{w-i+1})/2,(x_i+x_{w-i+1})/2)$
for each $i\in\{1,\ldots \lfloor w/2\rfloor$ and then (ii) permuting the
coordinates so they are in an increasing order
$x_1'\leq\cdots\leq x_w'$. Note that when $w$ is odd,
then the value of the middle coordinate of $\tilde{x}$ is left unaltered,
but might be moved to another coordinate when reordered.
\begin{claim}
  \label{clm:xx'}
  For each such $\tilde{x}\in{\reals}^w$ we
  have $\extr(\tilde{x}') \leq \extr(\tilde{x})/2$.
\end{claim}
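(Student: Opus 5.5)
We prove the claim by locating every coordinate of $\tilde{x}'$ inside one explicit interval of length $\extr(\tilde{x})/2$. Reordering in step (ii) does not change the multiset of values, so $\extr(\tilde{x}')$ equals the maximum minus the minimum over that multiset. It therefore suffices to bound each new value above and below.

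Write $m = \lfloor w/2\rfloor$ and, for $i\in\{1,\ldots,m\}$, set $p_i = (x_i+x_{w-i+1})/2$. Since $1\leq x_1\leq\cdots\leq x_w$, for each such $i$ we have $x_1\leq x_i$ and $x_{w-i+1}\leq x_w$. Adding gives $p_i\leq (x_1+x_w)/2$.

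For a lower bound, note that $i\leq w-i+1$, so $x_i\geq x_1$ and $x_{w-i+1}\geq x_i\geq x_1$. This yields only $p_i\geq x_1$, which is too weak. Instead I would use the pairing structure. Since $i\leq m$, we have $w-i+1\geq m+1$, so $x_{w-i+1}$ lies in the upper half of the sorted list. Both $x_{w-i+1}$ and $x_i$ are at least $x_1$. Also $p_i\geq (x_1 + x_{w-i+1})/2\geq (x_1+x_{m+1})/2$. This bound is not directly comparable with the upper bound, so I will instead aim to show that every value lies in $[(x_1+\mu)/2,\,(\mu+x_w)/2]$ for a suitable midpoint $\mu$.

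The natural choice of interval is $[x_1+\delta,\,x_w-\delta]$ with $\delta = \extr(\tilde{x})/4$, since its length is exactly $\extr(\tilde{x})/2$. Checking whether each $p_i$ lies in it is where I expect the main obstacle. Take $w=4$ with $(1,1,1,9)$: the pairs give $p_1=(1+9)/2=5$ and $p_2=(1+1)/2=1$, so $\extr(\tilde{x}')=4$, while $\extr(\tilde{x})=8$. Equality holds, so the claim survives this test. In general, however, with $x_1=x_2=\dots=x_{w-1}=1$ and $x_w=L$, we get $p_1=(1+L)/2$, the remaining pairs give $1$, and $\extr(\tilde{x}') = (L-1)/2$, which is exactly half. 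So the upper bound $p_i\leq (x_1+x_w)/2$ combined with the trivial lower bound $p_i\geq x_1$ already gives $\extr(\tilde{x}')\leq (x_1+x_w)/2 - x_1 = \extr(\tilde{x})/2$.

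When $w$ is odd, the unaltered middle value $x_{m+1}$ must also be handled. Here I would use the symmetric bounds: each $p_i\geq (x_i + x_{w-i+1})/2$ with $x_{w-i+1}\geq x_{m+1}$, so $p_i\geq (x_1+x_{m+1})/2$, and symmetrically $p_i\leq (x_{m+1}+x_w)/2$. Then all values, including $x_{m+1}$, lie in $[\min((x_1+x_{m+1})/2,\,x_{m+1}),\ \max((x_{m+1}+x_w)/2,\,x_{m+1})] = [(x_1+x_{m+1})/2,\,(x_{m+1}+x_w)/2]$. This interval has length $(x_w-x_1)/2$.

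For even $w$ the same symmetric argument applies, taking $\mu$ to be any value between $x_m$ and $x_{m+1}$, because each pair has one member on each side of $\mu$. This unified midpoint argument is the proof I would write. It combines the two cases and makes the trivial-bound computation above unnecessary.
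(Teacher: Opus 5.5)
Your final ``unified midpoint'' argument is correct and is essentially the paper's proof. The paper takes the median $\overline{x}$ and uses $x_1\le x_j$, $x_i\le\overline{x}\le x_{w-i+1}$, $x_{w-j+1}\le x_w$. This places every pair-average in $[(x_1+\overline{x})/2,(\overline{x}+x_w)/2]$, exactly as you do with $\mu$. Your version is slightly more careful for odd $w$, because you explicitly put the unaltered middle value $x_{m+1}$ into that interval. The paper instead asserts that $\extr(\tilde{x}')$ is a difference of two pair-averages, which fails when the middle value is extreme: for $\tilde{x}=(1,2,2)$ one gets $\tilde{x}'=(3/2,3/2,2)$, but there is only one pair-average.

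Do not keep the earlier ``trivial-bound'' computation, even as a remark. It is not merely unnecessary, it is false. Adding $x_1\le x_i$ and $x_{w-i+1}\le x_w$ gives $x_1+x_{w-i+1}\le x_i+x_w$, not $p_i\le (x_1+x_w)/2$. For example, $\tilde{x}=(1,8,8,9)$ has $p_2=8>5=(x_1+x_w)/2$. The family $(1,\dots,1,L)$ you tested happens to satisfy that bound, which is why the check seemed to confirm it. Write only the midpoint argument, with the even case using any $\mu\in[x_m,x_{m+1}]$ and the odd case using $\mu=x_{m+1}$.
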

\begin{proof}
In the case where $\tilde{x}$ is uniform, then $\tilde{x}=\tilde{x}'$ and $\extr(\tilde{x})/2=\extr(\tilde{x}')=0$. If not, first let $\overline{x}$ denote the median of $\tilde{x}$. By construction, it holds that $$\extr(\tilde{x}')= \frac{x_i+x_{w-i+1}}{2}-\frac{x_j+x_{w-j+1}}{2}$$ for some $i, j\in \{1,\ldots,\lfloor \frac{w}{2}\rfloor\}$. 
  Observe that by definition, $x_1 \leq x_j,~x_i \leq \overline{x} \leq x_{w-i+1},$ and $x_{w-j+1} \leq x_w$. Thus $x_i-x_j \leq \overline{x}-x_1$ and $x_{w-j-1}-x_{w-i-1} \leq x_w-\overline{x}$.
  Therefore: 
  \begin{eqnarray*}
        \extr(\tilde{x}') &=&\frac{x_i+x_{w-i+1}}{2}-\frac{x_j+x_{w-j+1}}{2}\\
       &= &\frac{x_i-x_j}{2}
  +\frac{x_{w-j+1}-x_{w-i+1}}{2} \\
  &\leq& \frac{\overline{x}-x_1}{2}+\frac{x_w-\overline{x}}{2} \\
  &\leq&\frac{x_w-x_1}{2} \\
  & \leq & \frac{\extr(\tilde{x})}{2}.
  \end{eqnarray*}~
\end{proof}   
Given $\tilde{x}\in {\reals}^w$, a sequence 
$(\tilde{x}_n)_{n\geq 0}$ can be formed by the recursion $\tilde{x}_0 = \tilde{x}$
and $\tilde{x}_{n+1} = \tilde{x}_n'$ from the above construction.
Note that $\frac{1}{w}\sum_{i=1}^w x_{n;i} = \mu$ is fixed for each term $\tilde{x}_n=(x_{n;1}, \dots,x_{n;w})$. 
By Claim~\ref{clm:xx'} we have
$\extr(\tilde{x}_n)\leq \extr(\tilde{x})/2^n$ for each $n$. Hence
$\extr(\tilde{x}_n)\rightarrow 0$ as $n$ tends to infinity, and therefore we have the following.
\begin{lemma}
$$
\lim_{n\rightarrow\infty}\tilde{x}_n = 
\left(\frac{\sum_{i=1}^wx_i}{w},\ldots,\frac{\sum_{i=1}^wx_i}{w}\right)=(\mu,\mu, \dots, \mu).
$$
\end{lemma}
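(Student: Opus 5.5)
The plan is to squeeze every coordinate of $\tilde{x}_n$ between $\min(\tilde{x}_n)$ and $\max(\tilde{x}_n)$ and to use that the mean is invariant. Two facts are needed: the mean of the coordinates stays equal to $\mu$ at every step, and $\extr(\tilde{x}_n)\to 0$. Together they force every coordinate to converge to $\mu$, and convergence in $(\reals^w,\|\cdot\|)$ is the same as coordinatewise convergence.

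First I will verify the invariance of the mean. Step (i) of the construction replaces a pair $(x_i,x_{w-i+1})$ by two copies of its average, which does not change $x_i+x_{w-i+1}$. When $w$ is odd, the middle coordinate is untouched. Step (ii) only permutes coordinates. Hence $\sum_{i=1}^w x_{n;i}=\sum_{i=1}^w x_i = w\mu$ for every $n$, by induction on $n$.

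Next I will bound each coordinate's distance from $\mu$. Since $\mu$ is the average of the coordinates of $\tilde{x}_n$, we have $\min(\tilde{x}_n)\le \mu\le \max(\tilde{x}_n)$. Each coordinate $x_{n;i}$ also lies in $[\min(\tilde{x}_n),\max(\tilde{x}_n)]$. Therefore
\[
|x_{n;i}-\mu|\le \max(\tilde{x}_n)-\min(\tilde{x}_n)=\extr(\tilde{x}_n)\le \frac{\extr(\tilde{x})}{2^n},
\]
where the last inequality comes from iterating Claim~\ref{clm:xx'} by induction.

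To apply the claim at each step, I must check that its hypotheses persist. Coordinates stay $\ge 1$, because averages of numbers $\ge 1$ are $\ge 1$. Step (ii) keeps the coordinates sorted increasingly, which is the ordering the claim uses.

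Finally I will conclude the convergence. The bound above gives
\[
\|\tilde{x}_n-(\mu,\ldots,\mu)\|\le \sqrt{w}\,\extr(\tilde{x})/2^n,
\]
and the right-hand side tends to $0$, so $\tilde{x}_n\to(\mu,\ldots,\mu)$.

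The only step needing care is the induction that repeatedly invokes Claim~\ref{clm:xx'}, specifically confirming that the claim's standing assumptions hold at every iterate. The rest is routine.
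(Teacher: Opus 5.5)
Your proof is correct and follows the same route as the paper: the mean $\mu$ is invariant, and Claim~\ref{clm:xx'} gives $\extr(\tilde{x}_n)\le \extr(\tilde{x})/2^n$. You also make explicit the squeeze $|x_{n;i}-\mu|\le \extr(\tilde{x}_n)$ and the resulting norm bound, which the paper leaves implicit when it passes from $\extr(\tilde{x}_n)\to 0$ to the limit.
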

Applying the symmetric function $S_{w;\tilde{a}}(\tilde{x})$ from
(\ref{eqn:Sw}) and (\ref{eqn:S-clear-max}) it follows that
\[
S_{w;\tilde{a}}(\tilde{x})
= S_{w;\tilde{a}}(\tilde{x}_0)
\leq S_{w;\tilde{a}}(\tilde{x}_1)
\leq\cdots\leq S_{w;\tilde{a}}(\tilde{x}_n)\leq\cdots,
\]
and hence 
\begin{equation}
  \label{eqn:real-upper-bound}
  S_{w;\tilde{a}}(\tilde{x}) \leq
  S_{w;\tilde{a}}\left(\frac{\sum_{i=1}^wx_i}{w},
  \ldots,\frac{\sum_{i=1}^wx_i}{w}\right).
\end{equation}
By the symmetry of $S_{w;\tilde{a}}(\tilde{x})$, the above
inequality (\ref{eqn:real-upper-bound}) holds for any
$\tilde{x}\in {\reals}^w$, regardless of the ordering of the coordinates. Finally, this yields a concrete
upper bound of $S_{w;\tilde{a}}(\tilde{n})$ when $\sum_in_i = n$ is fixed:
\begin{theorem}
    \label{thm:S_bound}
    If $\tilde{x}\in \mathbb{R}^w$ with $\sum_{i=1}^w x_i = n$, then 
    $S_{w;\tilde{a}}(\tilde{x}) \leq S_{w;\tilde{a}}(\frac{n}{w},\ldots,\frac{n}{w})$.
\end{theorem}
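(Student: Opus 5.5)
The plan is to derive Theorem~\ref{thm:S_bound} from inequality~(\ref{eqn:real-upper-bound}) by iterated pairwise averaging followed by a limit argument. The one step that needs real care is showing that averaging a single pair never decreases $S_{w;\tilde{a}}$.

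\textbf{Step 1: averaging one pair.} Fix $\tilde{x}$ with $\sum_i x_i = n$. I would work in the regime used throughout this section, namely $x_i\geq 1$ (in particular $x_i\ge 0$). By symmetry of $S_{w;\tilde{a}}$, the decomposition preceding (\ref{eqn:S-clear-max}) can be applied to any chosen pair of coordinates $(x,y)=(x_i,x_j)$, not only the last two. It gives
\[
S_{w;\tilde{a}}(\tilde{x}) = A + B\left(\tbinom{x+1}{2}+\tbinom{y+1}{2}\right) + D\,\tbinom{x+1}{2}\tbinom{y+1}{2},
\]
where $A,B,D\geq 0$ depend only on the other coordinates.

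Replace $(x,y)$ by $(C/2,C/2)$, where $C=x+y$. Since $t\mapsto\binom{t+1}{2}$ is convex, the middle term changes sign in the wrong direction, so I would not try to argue term by term. Instead I would use the completed-square form (\ref{eqn:S-clear-max}). With $C$ fixed, $S$ is an upward parabola in $p=xy$, and its vertex lies at $p$-values below the admissible range because the bracket there is at least $1/2$. So $S$ is increasing in $p$ on the admissible range, and $p\le C^2/4$ by AM--GM. Hence averaging the pair does not decrease $S$. This is the content of the parenthetical remark ``by AM--GM'' in the text, now for real $x,y$.

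\textbf{Step 2: iterate and take limits.} The map $\tilde{x}\mapsto\tilde{x}'$ of Claim~\ref{clm:xx'} is a composition of $\lfloor w/2\rfloor$ such pair averagings on disjoint pairs, followed by a reordering. By Step~1 and symmetry, $S_{w;\tilde{a}}(\tilde{x}_n)$ is nondecreasing in $n$. Each averaging keeps coordinates within $[\min,\max]$, so the hypothesis $x_i\ge1$ is preserved. By the preceding Lemma, $\tilde{x}_n\to(n/w,\ldots,n/w)$; note that $\mu=n/w$ since $\sum_i x_i=n$. Since $S_{w;\tilde{a}}$ is a polynomial, it is continuous. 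Therefore
\[
S_{w;\tilde{a}}(\tilde{x})\le \lim_{k\to\infty} S_{w;\tilde{a}}(\tilde{x}_k)=S_{w;\tilde{a}}(n/w,\ldots,n/w),
\]
which is exactly (\ref{eqn:real-upper-bound}).

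\textbf{Main obstacle.} The difficulty lies entirely in Step~1's monotonicity in $p$. One has to check that the vertex
\[
p_0=-\tfrac{1}{2}\left(C+1-4B/D\right)
\]
satisfies $p_0\le xy$ throughout the relevant domain, and that $D>0$ (if $D=0$, the expression is linear in $p$ with coefficient $-B\le 0$, which would break the argument). I would handle this by expanding directly:
\[
\tbinom{x+1}{2}+\tbinom{y+1}{2}=\tfrac{1}{2}\bigl(C^2+C\bigr)-p,\qquad
\tbinom{x+1}{2}\tbinom{y+1}{2}=\tfrac{1}{4}\,p\,(p+C+1).
\]
This gives
\[
\frac{\partial S}{\partial p}=-B+\tfrac{D}{4}(2p+C+1).
\]
It remains to show this is nonnegative, using $D\ge B$ (from $a_{k}\le a_{k+1}$ and the fixed coordinates being $\ge1$, so each product of binomials is $\ge1$) together with $2p+C+1\geq 2+2+1=5>4$ when $x,y\ge1$. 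Then $\partial S/\partial p\ge \tfrac{D}{4}\cdot5-B\ge \tfrac{D}{4}>0$ whenever $D>0$, and the case $D=0$ forces $B=0$ as well. For $\tilde{x}$ outside the range $x_i\ge1$, I would state the theorem with that standing assumption, as the surrounding section implicitly does.
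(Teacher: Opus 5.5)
Your proposal is correct and follows the paper's own route. You iterate the pairwise averaging of Claim~\ref{clm:xx'}, use the completed-square form (\ref{eqn:S-clear-max}) to see that $S_{w;\tilde{a}}$ is nondecreasing in $xy$ at fixed $x+y$, and pass to the limit by continuity; your derivative bound $-B+\tfrac{D}{4}(2p+C+1)\geq \tfrac{D}{4}$ makes the paper's ``bracket $\geq 1/2$'' step explicit and also covers the case $D=0$.

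Your standing hypothesis $x_i\geq 1$ is genuinely necessary, not merely convenient. For $w=2$ and $\tilde{a}=(0,0,1)$, the point $(4,-2)$ gives $S_{2;\tilde{a}}=\binom{5}{2}\binom{-1}{2}=10>1=S_{2;\tilde{a}}(1,1)$, so the statement as literally written for all of $\mathbb{R}^w$ fails.
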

\begin{corollary}
    \label{cor:bigidea}
    Let $\Pos=(X,\leq)$ be a poset of fixed width $w$ and cardinality $|X| \leq n$. 
    Then the number of strict alternating cycles on $\Pos$, $|\mathcal{AC}^s(\Pos)|$, satisfies: 
    $$|\mathcal{AC}^s(\Pos)| \leq s_w(n):= \sum_{k=2}^w(k-1)!\binom{w}{k}\left(\frac{n(n+w)}{2w^2}\right)^{k}.$$
\end{corollary}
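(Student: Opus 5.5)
Let $m=|X|\leq n$. By Dilworth's Theorem, we will partition $X$ into $w$ disjoint chains $X_1,\ldots,X_w$ with sizes $\tilde{m}=(m_1,\ldots,m_w)$, each $m_i\geq 1$ and $\sum_i m_i=m$. The identity map gives an order preservation from $\chain(\tilde{m})=\chain_1\cup\cdots\cup\chain_w$ to $\Pos$. Proposition~\ref{prp:stict2strict} then yields $|\mathcal{AC}^s(\Pos)|\leq|\mathcal{AC}^s(\chain(\tilde{m}))|$. By Observation~\ref{obs:no-of-sac}, the right-hand side equals $S_{w;\tilde{a}}(\tilde{m})$ with $a_0=a_1=0$ and $a_k=(k-1)!$ for $2\leq k\leq w$. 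This sequence is nondecreasing, so the hypotheses used in Section~7 hold.

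Next we apply Theorem~\ref{thm:S_bound} to the real vector $\tilde{m}$, whose coordinates sum to $m$. This gives $S_{w;\tilde{a}}(\tilde{m})\leq S_{w;\tilde{a}}(m/w,\ldots,m/w)$. At the uniform point every product $\prod_{i\in I}\binom{x_i+1}{2}$ with $|I|=k$ equals $\binom{m/w+1}{2}^k$, and there are $\binom{w}{k}$ such index sets. Hence
\[
S_{w;\tilde{a}}\left(\tfrac{m}{w},\ldots,\tfrac{m}{w}\right)=\sum_{k=2}^w (k-1)!\binom{w}{k}\left(\frac{(m/w)(m/w+1)}{2}\right)^k=\sum_{k=2}^w (k-1)!\binom{w}{k}\left(\frac{m(m+w)}{2w^2}\right)^k .
\]

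Finally, to pass from $m$ to $n$ we note that $t\mapsto t(t+w)/(2w^2)$ is increasing for $t\geq 0$. Every coefficient $(k-1)!\binom{w}{k}$ is nonnegative, so the last expression is nondecreasing in $m$ and is at most $s_w(n)$.

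The main obstacle is checking that Theorem~\ref{thm:S_bound} really applies. The averaging iteration moves coordinates to non-integer reals, so we need the monotonicity coming from (\ref{eqn:S-clear-max}) to hold for real $x,y\geq 1$ with $x+y$ fixed. Averaging keeps every coordinate at least $\min(\tilde{m})\geq 1$, and the completed-square expression was already shown to be positive for $x,y\geq 1$, so moving $x$ and $y$ toward each other increases $xy$ and hence $S$. That is exactly what the preceding argument establishes, so we can cite Theorem~\ref{thm:S_bound} directly. The remaining case $w=1$ is harmless: the sum is empty and a chain has no strict alternating cycles.
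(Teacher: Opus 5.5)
Your proposal is correct and follows essentially the paper's route. You use Dilworth together with Proposition~\ref{prp:stict2strict} to reduce to $\chain(\tilde m)$, and Observation~\ref{obs:no-of-sac} to identify the count with $S_{w;\tilde a}(\tilde m)$. You then apply Theorem~\ref{thm:S_bound}, which needs coordinates at least $1$, as you check, and evaluate $\binom{m/w+1}{2}=\frac{m(m+w)}{2w^2}$ at the uniform point; your monotonicity step for $|X|<n$ and the $w=1$ remark fill in details the paper leaves implicit.
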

\begin{rmk*}
    When $n=w$, the above bound is precisely the number of strict alternating cycles of the trivial poset $\Pos_0$, as obtained in Proposition~\ref{prp:trivposet} and asymptotically satisfying Proposition~\ref{prp:alt-cycles-trivial}.
\end{rmk*}

\section{Computational Complexity and Corollaries}

Here it has been shown that if $w(n) = n$, so the poset is the trivial poset
$\Pos_0$, then the number $|\mathcal{AC}^s(\Pos_0)|$ of strict alternating cycles is asymptotically $e(n-1)!$. As evinced by Table \ref{table:1} in the appendix, anything proportional to its input (in this case $n$) factorial grows rapidly enough that computation quickly becomes infeasible.  On the other hand, if $w(n)$ is a constant, we have the following corollary:
\begin{corollary}
    \label{cor:poly2n}
    For all posets $\Pos=(X,\leq)$ with cardinality $n$ and constant width $w$, $s_w(n)$ from Corollary~\ref{cor:bigidea}  is a polynomial in $n$. Hence the maximum number of strict alternating cycles of posets of varying order $n$ and fixed width $w$ grows at worst polynomially in $n$.
\end{corollary}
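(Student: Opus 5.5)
The plan is to read the statement off the closed form of $s_w(n)$ in Corollary~\ref{cor:bigidea}, then combine it with the monotonicity already established for strict alternating cycles. With $w$ held constant, $s_w(n)$ is a finite sum of $w-1$ terms, indexed by $k\in\{2,\ldots,w\}$. The $k$-th term is $(k-1)!\binom{w}{k}\left(\frac{n(n+w)}{2w^2}\right)^{k}$. In it, the coefficient $(k-1)!\binom{w}{k}(2w^2)^{-k}$ is a constant depending only on $w$ and $k$, and $(n(n+w))^k = n^k(n+w)^k$ is a polynomial in $n$ of degree $2k$ by the binomial theorem. So each term is a polynomial in $n$. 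A finite sum of polynomials is a polynomial, and $s_w(n)$ has degree exactly $2w$, coming from the $k=w$ term with leading coefficient $(w-1)!/(2w^2)^w>0$. In particular $s_w(n)=O(n^{2w})$.

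For the growth claim, I would define $M_w(n)=\max\{|\mathcal{AC}^s(\Pos)| : |X|=n,\ \text{width } w\}$, where $n\geq w$ so that the set is nonempty. Corollary~\ref{cor:bigidea} bounds every member of this set by $s_w(n)$, hence $M_w(n)\leq s_w(n)=O(n^{2w})$. For fixed $w$ this bound is polynomial in $n$, which is exactly the statement.

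The one point needing care is justifying Corollary~\ref{cor:bigidea} itself, in particular the passage from integers to reals. Corollary~\ref{cor:tur-max-final} together with Observation~\ref{obs:no-of-sac} gives $|\mathcal{AC}^s(\Pos)|\leq S_{w;\tilde a}(\tilde n)$, where $a_0=a_1=0$ and $a_k=(k-1)!$; this $\tilde a$ is nondecreasing. Theorem~\ref{thm:S_bound} then gives $S_{w;\tilde a}(\tilde n)\leq S_{w;\tilde a}(n/w,\ldots,n/w)$. Substituting $x_i=n/w$ turns $\binom{x_i+1}{2}$ into $\frac{n(n+w)}{2w^2}$ and each inner sum over $\binom{[w]}{k}$ into $\binom{w}{k}$ copies of its $k$-th power, which is exactly $s_w(n)$.

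Two side conditions remain, and neither is a real obstacle. First, the case $|X|<n$ allowed in Corollary~\ref{cor:bigidea} is harmless because $s_w$ is increasing in $n$. Second, the Dilworth chains may have unequal sizes, which is handled by the Turán comparison in Corollary~\ref{cor:tur-max-final}.
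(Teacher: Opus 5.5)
Your proposal is correct and follows the paper's own reasoning. With $w$ fixed, $s_w(n)$ from Corollary~\ref{cor:bigidea} is a finite sum of polynomials in $n$, and its top term $(w-1)!\left(\frac{n(n+w)}{2w^2}\right)^w$ has degree $2w$ with leading coefficient $\frac{(w-1)!}{2^w w^{2w}}$, which gives the $O(n^{2w})$ growth. The only nit is the degenerate case $w=1$: there the sum is empty and $s_1\equiv 0$, so ``degree exactly $2w$'' fails, but polynomiality and the bound are unaffected.
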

   The highest order term of $s_w(n)$ is 
   $$(w-1)!\binom{w}{w}\left(\frac{n(n+w)}{2w^2}\right)^w=(w-1)!\left(\frac{n^2+nw}{2w^2}\right)^w.$$ 
   As $n \to \infty$ the linear term is negligible, so it is asymptotically  $\frac{(w-1)!}{2^ww^{2w}} n^{2w}$, which is $O(n^{2w})$. 
   
   Bridging this gap is an area of interest, and results to that effect are as follows:
\begin{lemma}
    \label{lmm:ass-alt-cycles}
    For all posets $\Pos=(X,\leq)$ of cardinality $|X| = n$ with width of $w = w(n)$, an increasing function of $n$ that we 
    will assume tends to infinity when $n$ does, 
    $s_w(n)$ from Corollary~\ref{cor:bigidea} satisfies 
    \[
    s_w(n) = \Theta\left((w-1)!\left(\frac{n(n+w)}{2w^2}\right)^w\right) = 
    \Theta\left(\sqrt{w-1}\left(\frac{w-1}{e}\right)^{w-1}\left(\frac{n(n+w)}{2w^2}\right)^w\right),
    \]
    as $n$ tends to infinity.
\end{lemma}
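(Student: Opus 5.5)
The plan is to show that the top term $k=w$ dominates $s_w(n)$, and then to apply Stirling's formula. Write
\[
q = q(n) := \frac{n(n+w)}{2w^2} = \frac{1}{2}\cdot\frac{n}{w}\left(\frac{n}{w}+1\right), \qquad t_k := (k-1)!\binom{w}{k} q^k = \frac{w!}{k\,(w-k)!}\,q^k ,
\]
so that $s_w(n)=\sum_{k=2}^w t_k$ and $t_w=(w-1)!\,q^w$. Since the width of a poset never exceeds its cardinality, we have $n\geq w$, and therefore $q\geq 1$. This is the only property of $q$ I intend to use. Also $w\geq 2$ for large $n$, because $w(n)\to\infty$.

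\textbf{Lower bound.} Every $t_k$ is nonnegative, so $s_w(n)\geq t_w=(w-1)!\,q^w$ trivially.

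\textbf{Upper bound.} Compare each term with the top one:
\[
\frac{t_k}{t_w} = \frac{w}{k\,(w-k)!}\,q^{k-w}\leq \frac{w}{k\,(w-k)!},
\]
where the inequality uses $q\geq 1$ and $k\leq w$. Substituting $j=w-k$ gives
\[
\frac{s_w(n)}{t_w}\leq \sum_{j=0}^{w-2}\frac{1}{j!}\cdot\frac{w}{w-j}.
\]
This is essentially the sum $\Sigma_2$ already bounded in Section 5, and I would split it the same way, at $m=\lfloor w/2\rfloor$.
\begin{itemize}
\item For $j\leq m$ we have $w/(w-j)\leq 2$, so this part contributes at most $2e$.
\item For $m<j\leq w-2$ we have $w/(w-j)\leq w/2$. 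Using (\ref{eqn:e-fact}), this part contributes at most $\frac{w}{2}\cdot\frac{2}{(m+1)!}\leq \frac{w}{\lfloor w/2\rfloor!}$.
\end{itemize}
The quantity $w/\lfloor w/2\rfloor!$ is bounded by an absolute constant; for example it is at most $3$ for all $w\geq 2$. Hence $s_w(n)\leq (2e+3)\,t_w$ uniformly in $n$ and $w$, which together with the lower bound proves the first $\Theta$ in Lemma~\ref{lmm:ass-alt-cycles}.

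\textbf{Second form.} Since $w=w(n)\to\infty$, Stirling's formula gives $(w-1)!\sim\sqrt{2\pi(w-1)}\,\bigl((w-1)/e\bigr)^{w-1}$. Therefore $(w-1)!=\Theta\bigl(\sqrt{w-1}\,((w-1)/e)^{w-1}\bigr)$, and multiplying by $q^w$ gives the second expression.

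\textbf{Main obstacle.} The only delicate step is making the bound on $\sum_j \frac{1}{j!}\frac{w}{w-j}$ uniform in $w$. Near $j=w-2$ the factor $w/(w-j)$ is as large as $w/2$, and it is killed only by the factorial $1/j!$; the split at $\lfloor w/2\rfloor$, as in the treatment of $\Sigma_2(n)$, handles this. The argument also relies on $q\geq 1$, that is, on $n\geq w$; without it the lower-order terms could dominate.
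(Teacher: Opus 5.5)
Your proof is correct and follows the paper's route: you take the $k=w$ term as the lower bound, factor out $(w-1)!\,q^w$, use $q\ge 1$ (from $n\ge w$) to reduce the upper bound to a constant bound on $w\,g(w)$, and finish with Stirling. The only difference is that the paper quotes (\ref{eqn:gn-concr}) to get $g(w)\le 3/w$ for large $w$, whereas you bound $w\,g(w)\le 2e+3$ directly via the split at $\lfloor w/2\rfloor$, which gives a constant valid for every $w\ge 2$.
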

\begin{proof}
Let 
\[
s_w(n) = \sum_{k=2}^w(k-1)!\binom{w} {k}\left(\frac{n(n+w)}{2w^2}\right)^{k},
\]
as in Corollary~\ref{cor:bigidea}.
First, note that $s_w(n)$ is strictly greater than the last summand when $k=w$, and so 
\[
s_w(n) \geq (w-1)!\left(\frac{n(n+w)}{2w^2}\right)^w.
\]
Secondly, since $w\leq n$ then $n(n+w)/(2w^2)\geq 1$ and 
therefore the following holds:
\begin{eqnarray*}
s_w(n) &= &
w!\left(\frac{n(n+w)}{2w^2}\right)^w\sum_{k=2}^w\frac{1}{(w-k)!k}\left(\frac{n(n+w)}{2w^2}\right)^{k-w} \\
&\leq&
w!\left(\frac{n(n+w)}{2w^2}\right)^w\sum_{k=2}^w\frac{1}{(w-k)!k}.
\end{eqnarray*}
By replacing $n$ in $g(n)$ from Proposition~\ref{prp:trivposet} with $w$ to obtain
$$
g(w) = \sum_{k=0}^{w-2}\frac{1}{k!}\frac{1}{w-k}=\sum_{k=2}^{w} \frac{1}{(w-k)!}\frac{1}{k},
$$
and since $\lim_{n\rightarrow \infty}w(n) = \infty$, then for sufficiently large $w$ we can apply (\ref{eqn:gn-concr}) to obtain that
$$
g(w) = \sum_{k=2}^w\frac{1}{(w-k)!k}\leq \frac{3}{w}
$$ for large enough $w$, and hence 
\[
s_w(n) \leq w!\left(\frac{n(n+w)}{2w^2}\right)^w \cdot\frac{3}{w} = 3(w-1)!\left(\frac{n(n+w)}{2w^2}\right)^w.
\]
Applying Stirling's approximation formula (see \cite{Graham-Knuth-Patashnik}) to $(w-1)!$ we obtain 
the lemma.
\end{proof}
Using Lemma~\ref{lmm:ass-alt-cycles} will yield four corollaries, starting with:
\begin{corollary}
    \label{cor:sublinear}
    If $\frac{w(n)}{n}\to 0$ as $n \to \infty$, then for sufficiently large $n$:
     $$\log(s_w(n))=\Theta(w \log (n)).$$
\end{corollary}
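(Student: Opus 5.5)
The plan is to take logarithms in the asymptotic form of Lemma~\ref{lmm:ass-alt-cycles}, which gives
\[
\log(s_w(n)) = \log(w-1)! + w\log\left(\frac{n(n+w)}{2w^2}\right) + O(1)
\]
as $n\to\infty$. (I work under the standing assumption of that lemma that $w=w(n)$ increases to infinity; the case of constant $w$ follows directly from Corollary~\ref{cor:poly2n} and the leading term $\frac{(w-1)!}{2^w w^{2w}}n^{2w}$ given after it.) I will then prove an upper bound and a lower bound of order $w\log n$ for the right-hand side, treating the factorial term and the power term separately.

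\textbf{Upper bound.} I will use the crude estimate $\log (w-1)! \leq w\log w$. Since $w\leq n$, we have $n+w\leq 2n$, so
\[
\frac{n(n+w)}{2w^2}\leq \frac{n^2}{w^2}\leq n^2 .
\]
Hence the power term is at most $2w\log n$. Together with $w\log w\leq w\log n$, this gives $\log(s_w(n))\leq 3w\log n + O(1)$. This is $O(w\log n)$ because $w\log n\to\infty$.

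\textbf{Lower bound.} This is the step needing care, because the $w\log w$ contributions from the factorial and from the $w^{-2w}$ factor partly cancel. I will use $n(n+w)\geq n^2$ and Stirling's formula in the form $\log(w-1)!\geq (w-1)\log(w-1)-(w-1)$. This yields
\[
\log(s_w(n)) \geq 2w\log n - 2w\log w + (w-1)\log(w-1) - O(w).
\]
Since $(w-1)\log(w-1) = w\log w - O(\log w)$, this simplifies to
\[
\log(s_w(n)) \geq 2w\log n - w\log w - O(w).
\]
Now $w\leq n$ gives $w\log w\leq w\log n$, so $\log(s_w(n))\geq w\log n - O(w)$. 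Since $\log n\to\infty$, the $O(w)$ term is $o(w\log n)$, and for sufficiently large $n$ we get $\log(s_w(n))\geq \tfrac12 w\log n$.

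Combining the two bounds gives $\log(s_w(n))=\Theta(w\log n)$. I expect the only real obstacle to be the cancellation in the lower bound. The hypothesis $w/n\to 0$ only strengthens the estimate: it makes $-w\log w\geq -w\log n + w\log(n/w)$, so the lower constant improves toward $1$. The argument above, however, needs only $w\leq n$ and $n\to\infty$.
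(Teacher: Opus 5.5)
Your proof is correct and is essentially the paper's argument: take logarithms in the two-sided bound of Lemma~\ref{lmm:ass-alt-cycles}, use Stirling to see that $\log(w-1)!$ cancels half of the $-2w\log w$ coming from the $w^{-2w}$ factor, and bound what remains by constant multiples of $w\log n$ using only $w\le n$ and $n\to\infty$ (the paper, too, never genuinely uses $w/n\to 0$). The only difference is cosmetic: the paper applies Stirling in both directions and gets the constants $5$ and $\tfrac12$, whereas you use the cruder $\log(w-1)!\le w\log w$ for the upper bound and get $3$ and $\tfrac12$.
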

\begin{proof}
    From Lemma~\ref{lmm:ass-alt-cycles}, $s_w(n) \leq C(w-1)!\left(\frac{n(n+w)}{2w^2}\right)^w$, where $C$ is a positive constant (in fact $C=3$ 
    works as demonstrated in the previous proof). Multiplying by $w$ and applying Stirling's approximation yields
    $$ws_w(n) \leq  C\sqrt{w}\left(\frac{w}{e}\right)^w\left( \frac{n(n+w)}{2w^2}\right)^w=C\sqrt{w}\left(\frac{n(n+w)}{2ew}\right)^w.$$
    Thus $s_w(n)\leq \frac{C}{\sqrt{w}}\left(\frac{n(n+w)}{2ew}\right)^w$. Applying $\log$ to both sides:
    $$
        \log(s_w(n))  \leq  \log(C) -\frac{1}{2}\log(w)+w\log(n^2+nw)-w\log(2ew)
    $$
    The value $\log(C)- \frac{1}{2}\log(w)-w\log(2ew)$ is certainly less than $w\log(n)$ for large $n$, and $$w\log(n^2+nw) \leq w\log(2n^2)\leq 4w\log(n).$$
    Thus $\log(s_w(n) \leq 5w \log(n) $.
    
    A similar argument using the lower bound from Lemma~\ref{lmm:ass-alt-cycles} shows that for some constant $D$ we have:
    $$\log(s_w(n)) \geq \log(D) -\frac{1}{2}\log(w)+w\log(n^2+nw)-w\log(2ew).$$
    Since $w\log(n^2) \leq w\log(n^2+nw)$ and $-\frac{3}{2}w\log(n) \leq \log(D)-\frac{1}{2}\log(w)-w\log(2ew)$ for large enough $n$, we can conclude that $s_w(n)\geq \frac{1}{2}w\log(n)$ and therefore $\log(s_w(n))=\Theta(w\log(n)).$
\end{proof}
This will make two cases of interest straightforward to consider. The first is as follows: 
\begin{corollary}
    \label{cor:expalpha}
    For $w(n)=Cn^\alpha$, with $C$ a constant and $\alpha$ a real constant with $0<\alpha<1$, we have 
    $s_w(n)=n^{\Theta(n^{\alpha})}$, which is greater than quasi-polynomial but less than exponential in $n$. 
\end{corollary}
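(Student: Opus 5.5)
The plan is to derive this directly from Corollary~\ref{cor:sublinear}. With $w(n)=Cn^\alpha$ and $0<\alpha<1$, we have $w(n)/n = Cn^{\alpha-1}\to 0$, and $w(n)\to\infty$ is increasing. So the hypotheses of Lemma~\ref{lmm:ass-alt-cycles} and Corollary~\ref{cor:sublinear} hold; we also need $C>0$ so that the width is positive and at most $n$ for large $n$. Corollary~\ref{cor:sublinear} then gives
\[
\log(s_w(n)) = \Theta(w\log n) = \Theta(Cn^\alpha \log n) = \Theta(n^\alpha\log n).
\]
Exponentiating, $s_w(n) = e^{\Theta(n^\alpha \log n)} = n^{\Theta(n^\alpha)}$. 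This is the first assertion. The only care needed is to carry the explicit constants from the proof of Corollary~\ref{cor:sublinear} (it gave $\tfrac12 w\log n \le \log s_w(n)\le 5w\log n$ for large $n$) into the exponent, so that we get $n^{c_1 n^\alpha}\le s_w(n)\le n^{c_2 n^\alpha}$ with $c_1=C/2$ and $c_2=5C$.

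For the comparison claims I would use the usual meanings. Quasi-polynomial means $2^{O((\log n)^c)}$ for some fixed $c$, and exponential means $2^{\Theta(n)}$. For the lower separation, note that $\log_2 s_w(n) \ge c_1 n^\alpha \log_2 n$. For any fixed $c$, $(\log n)^c = o(n^\alpha\log n)$, because $n^\alpha$ eventually dominates every power of $\log n$. Hence $s_w(n)$ is not $2^{O((\log n)^c)}$ for any $c$, so it is super-quasi-polynomial. For the upper separation, $\log_2 s_w(n) \le c_2 n^\alpha\log_2 n = o(n)$ since $\alpha<1$. So $s_w(n) = 2^{o(n)}$, which is subexponential: it is eventually smaller than $b^n$ for every $b>1$.

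I do not expect a real obstacle here. The one step needing attention is making sure the $\Theta$ in the exponent comes from honest two-sided bounds with constants independent of $n$. Those bounds come from Lemma~\ref{lmm:ass-alt-cycles} via Corollary~\ref{cor:sublinear}, not merely from the asymptotic statement. The other is spelling out precisely which notions of "quasi-polynomial" and "exponential" are being compared against.
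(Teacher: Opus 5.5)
Your proposal is correct and matches the paper's proof: substitute $w=Cn^\alpha$ into Corollary~\ref{cor:sublinear}, whose proof gives $\tfrac12 w\log n\le\log s_w(n)\le 5w\log n$, and exponentiate to get $n^{C_1n^\alpha}\le s_w(n)\le n^{C_2n^\alpha}$. The paper only asserts the super-quasi-polynomial and subexponential comparisons, so your explicit check of them is a useful addition, but the approach is the same.
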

\begin{proof}
    Using Corollary~\ref{cor:sublinear}, we plug in $Cn^\alpha$ for $w$ to yield
    $$C_1n^a\log(n)\leq\log(s_w(n)) \leq C_2n^\alpha \log(n)).$$ Exponentiating all terms will yield 
    $$n^{C_1 n^\alpha} \leq s_w(n)\leq n^{C_2n^\alpha}$$
    which is the desired result.
\end{proof}
Additionally, we have the following:
\begin{corollary}
    \label{cor:polylog}
    If $w(n)=C\log^k(n)$, with $C,k \in \reals^+$ constants, then $\log(s_w(n))=O(\log^{k+1}n)$, and therefore $s_w(n)$ is quasi-polynomial in $n$.
\end{corollary}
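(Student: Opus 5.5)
The plan is to reduce the statement to Corollary~\ref{cor:sublinear} with $w=C\log^k(n)$, mirroring the proof of Corollary~\ref{cor:expalpha}. Before doing so, we check the hypotheses of that corollary and of Lemma~\ref{lmm:ass-alt-cycles}. The width $w(n)=C\log^k n$ is increasing in $n$ and tends to infinity, since $C,k>0$. Moreover $w(n)/n=C\log^k(n)/n\to 0$, because any fixed power of a logarithm is $o(n)$; one can see this with L'H\^opital's rule, or by substituting $n=e^t$ to get $Ct^k e^{-t}\to 0$.

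One technicality: $w$ must be a positive integer with $w\le n$. So strictly one should take $w(n)=\lceil C\log^k n\rceil$, or simply note that the argument only uses $w\to\infty$, $w\le n$, and $w=\Theta(\log^k n)$. Rounding changes $w$ by at most $1$, which does not affect any $\Theta$ or $O$ estimate.

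Corollary~\ref{cor:sublinear} then gives, for sufficiently large $n$,
\[
\log(s_w(n)) \le 5\,w\log(n) \le 5(C\log^k n+1)\log n = O(\log^{k+1} n),
\]
using the explicit constant $5$ from its proof. In fact the lower bound from the same corollary gives $\Theta(\log^{k+1}n)$, but only the upper bound is needed.

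Finally, exponentiating yields $s_w(n)\le e^{D\log^{k+1}n}=n^{D\log^{k}n}$ for some constant $D$. This is exactly the standard form $2^{O(\log^{c} n)}$ with $c=k+1$, so $s_w(n)$ is quasi-polynomial in $n$. By Corollary~\ref{cor:bigidea}, the same bound applies to $|\mathcal{AC}^s(\Pos)|$ for every such poset.

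There is no real obstacle here; the only points needing care are the integrality of $w$ and confirming $w/n\to0$ so that Corollary~\ref{cor:sublinear} applies. If $0<k<1$ the bound is still quasi-polynomial, and even sub-quadratic in the exponent's $\log$ power, so no case split on $k$ is needed.
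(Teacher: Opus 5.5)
Your proposal is correct and follows the paper's proof: substitute $w=C\log^k n$ into Corollary~\ref{cor:sublinear} to get $\log(s_w(n))=O(\log^{k+1}n)$, then exponentiate. Your version is more careful than the paper's. You check $w/n\to 0$ and the integrality of $w$. You also write the conclusion correctly as $s_w(n)\le e^{D\log^{k+1}n}$, with the constant in the exponent; the paper's final line $s_w(n)\le Ce^{\log^{k+1}(n)}$ misplaces it.
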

\begin{proof}
    Plugging in to Corollary~\ref{cor:sublinear}:
    $$\log(s_w(n))\leq C \log^k(n)\log(n)=C\log^{k+1}(n)$$
    Therefore $s_w(n)\leq Ce^{\log^{k+1}(n)}.$
\end{proof}
Recall the Gamma function $\Gamma(z):= \int_0^\infty t^{z-1}e^{-t}dt$ for $z$ a complex value with positive real component, and its property that if $z=x \in \reals^+$ then $\Gamma(x+1)=x\Gamma(x)$. 
\begin{corollary}
    \label{cor:alphan}
    If $w$ is a fixed proportion of $n$, such that $w=\alpha n$ with $0<\alpha <1$, then 
    $$
    s_w(n)=\Theta \left(\Gamma(\alpha n) \beta^n \right)
    $$  
    for some positive real constant $\beta$.
\end{corollary}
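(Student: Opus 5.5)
We plan to start from Lemma~\ref{lmm:ass-alt-cycles}. Since $w=\alpha n$ tends to infinity with $n$, it gives
\[
s_w(n)=\Theta\left((w-1)!\left(\frac{n(n+w)}{2w^2}\right)^w\right).
\]
Substituting $w=\alpha n$ into the base, the ratio simplifies to a constant:
\[
\frac{n(n+w)}{2w^2}=\frac{n\cdot n(1+\alpha)}{2\alpha^2n^2}=\frac{1+\alpha}{2\alpha^2}=:\kappa.
\]
This is a constant independent of $n$, and $\kappa\geq 1$ because $\alpha<1$. That $\kappa\ge1$ is exactly the hypothesis used in the upper-bound part of the proof of Lemma~\ref{lmm:ass-alt-cycles}.

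Next we rewrite $\kappa^w=\kappa^{\alpha n}=(\kappa^{\alpha})^n$ and set
\[
\beta:=\kappa^\alpha=\left(\frac{1+\alpha}{2\alpha^2}\right)^{\alpha}>0.
\]
Since $\Gamma(m)=(m-1)!$ for positive integers $m$, we have $(w-1)!=\Gamma(w)=\Gamma(\alpha n)$, which gives
\[
s_w(n)=\Theta\left(\Gamma(\alpha n)\beta^n\right).
\]

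The main obstacle is making sense of $\Gamma(\alpha n)$ when $\alpha n$ is not an integer. The width $w$ must be an integer, so the honest reading is $w=\lfloor \alpha n\rfloor$ or $\lceil\alpha n\rceil$. We then need $\Gamma(w)=\Theta(\Gamma(\alpha n))$ up to a factor that can be absorbed. Between $x$ and $x+\delta$ with $0\le\delta<1$, the log-convexity of $\Gamma$ (or Stirling, e.g.\ Gautschi's inequality) gives $\Gamma(x+\delta)/\Gamma(x)$ between roughly $x^{\delta-1}$ and $x^{\delta}$. This is a polynomial factor in $n$, not a constant, so strictly it is not absorbed by $\Theta$.

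We handle this in two ways. First, we restrict to those $n$ for which $\alpha n$ is an integer, where the statement holds exactly as above. Second, in general we note that the estimate holds up to a factor $n^{O(1)}$. Equivalently, we replace $\Gamma(\alpha n)$ by $\Gamma(w)$ with $w=w(n)$ the integer width, and we track the small perturbation $\kappa(n)=n(n+w)/(2w^2)=\kappa+O(1/n)$. Raising to the power $w$, $\kappa(n)^w=\kappa^w e^{O(1)}$, which keeps the exponential factor within a constant. We would present the integer case as the main proof and remark on this rounding issue.
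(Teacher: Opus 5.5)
Your argument is the same as the paper's: apply Lemma~\ref{lmm:ass-alt-cycles}, note that $n(n+w)/(2w^2)$ collapses to the constant $(1+\alpha)/(2\alpha^2)$, set $\beta=\bigl(\frac{1+\alpha}{2\alpha^2}\bigr)^{\alpha}$, and identify $(w-1)!=\Gamma(\alpha n)$. Your extra remark on integrality is a correct refinement that the paper skips, since its proof tacitly assumes $\alpha n\in\nats$: if $w=\lfloor\alpha n\rfloor$, the estimate holds only up to a polynomial factor in $n$, so restricting to integral $\alpha n$, or writing $\Gamma(w)$ in place of $\Gamma(\alpha n)$, is the right fix.
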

\begin{proof}
    By Lemma \ref{lmm:ass-alt-cycles} we have that 
    $$s_w(n)=\Theta\left((w-1)!\left(\frac{n(n+w)}{2w^2}\right)^w\right) $$
    Replacing each $w$ with $\alpha n$ and canceling $w^2$ from the fractional term yields:
    $$\Theta\left(\Gamma\left(\alpha n\right)\left(\frac{1+\alpha}{2\alpha^2}\right)^{\alpha n}\right).$$
    Letting $\beta=\left(\frac{1+\alpha}{2\alpha^2}\right)^\alpha$ proves the corollary. 
\end{proof}
As a function of $\alpha,$ $\beta$ is notably, for positive real input, at most roughly $1.82$ for an input $\alpha$ of roughly $.34$. These values were derived numerically: the maximum occurs at precisely at the positive root of $f(x)=2 + x + \log(2) + x \log(2) - (1 + x) \log(\frac{1 + x}{x^2})$, and the precise value it attains is even more elaborate. Figure \ref{fig:betagraph} shows this behavior.
\begin{figure}[h!]
    \centering
    \tikz[scale=.8,domain=0:1.1,samples=50]{
    \begin{axis}[axis x line=middle, axis y line=middle, stack plots=x, xmin=0, xmax=1,
    ymin=0, ymax=2]
        \addplot+[mark=none, color=black] {((x+1)/(2*x^2))^x} \closedcycle;
      
        \node[circle,fill,scale=.3] at (axis cs:.34,1.82){};
        \node[anchor=south,scale=1] at (axis cs:.34,1.82){$\sim(.34,1.82)$};
    
    \end{axis}
    \node[anchor=south west,scale=1.5] at (current axis.right of origin) {$\alpha$};
    \node[anchor=north west,scale=1.5] at (current axis.above origin) {$\beta$};
}
    \caption{Behavior of $(\frac{1+\alpha}{2\alpha^2})^\alpha = \beta$}
    \label{fig:betagraph}
\end{figure}
 Ultimately this means there is some interesting behavior in $\beta^n$ that is being masked by the dominance of the factorial term.

\section{Summary}

The main results of this article are Proposition \ref{prp:max-trivial}, Proposition \ref{prp:disc-opt}, and Corollary \ref{cor:bigidea}, establishing upper bounds on the number of alternating cycles possible on a general poset $\Pos$ (and an asymptotic approximation of this value) and a poset of fixed width $\Pos_w$ respectively. The general poset is shown to be bounded by the case of the trivial poset, and a poset of fixed width $w$ bounded by a Tur\'{a}n poset (Definition \ref{def:tur-pos}) of width $w$. Some corollaries regarding growth of $s_w(n)$ depending on $w=w(n)$, where $w(n)$ is an increasing function of $n$, are also shown. By the nature of strict alternating cycles, these bounds also apply to the number of hyperedges of the hypergraph $\mathcal{H}^s(\Pos)$ associated to $\Pos$, which has chromatic number $\odim(\Pos)$. 



%
\section{Supplementary Content}
\begin{table}[h!]
\centering
\begin{tabular}{||c c c c||} 
 \hline
 $n$ & Alternating Cycles on $\Pos_0$ with $|X|=n$ & $\sim e(n-1)!$ & Undercount (percent) \\ [0.5ex] 
 \hline\hline
 2 & 1 & e & -171.82\% \\ 
 3 & 5 & 5.4366 & -8.73\% \\
 3 & 20 & 16.310 & 18.45\% \\
 4 & 84 & 65.239 & 22.33\% \\
 6 & 409 & 326.19 & 20.25\% \\
 7& 2365& 1957.2& 17.24\%\\
 8& 16064& $1.37\times 10^4$& 14.71\% \\
 9 & 125664&$1.096\times10^5$ & 12.78\%\\
 10 & 1112073&$9.8641\times10^6$ & 11.3\%\\
 11 & 10976173& $9.8641\times10^6$& 10.13\%\\
 12 & 119481284& $1.0851\times10^8$& 9.19\% \\
 13 & 1421642628& $1.3021\times10^9$&  8.41\% \\
 14 & 18348340113& $1.6927\times 10^{10}$& 7.75\%\\
 15 & 255323504917& $2.3698\times 10^{11}$ &7.19\% \\
 16 & 3809950976992& $3.5546\times10^{12}$ & 6.70\% \\
 17 & 60683990530208& $5.6874\times 10^{13}$ & 6.28\%\\
 18 & 1027542662934897& $9.6686\times 10^{14}$& 5.91\% \\
 19 & 18430998766219317& $1.7403\times 10^{16}$ & 5.57\% \\
 20 & 349096664728623316& $3.3067\times 10^{17}$& 5.28\% \\
 21 & 6962409983976703316 & $6.6133\times 10^{18}$ & 5.01\%\\
 22 & 145841989688186383337 & $1.3888 \times 10^{20}$& 4.77\%\\
 23 & 3201192743180799343821 & $3.0554 \times 10^{21}$ & 4.55\%\\
 24 & 73474260073510897434976 & $7.0273 \times 10^{22}$ & 4.36\%\\
 25 & 1760027876001433251622720 & $1.6866 \times 10^{24}$ & 4.17 \%\\
 50 &$1.6879\times 10^{63}$ & $1.6535\times 10^{63}$ & 2.04\% \\
 100& $2.5628 \times 10^{156}$ &$2.5369\times 10^{156}$ & 1.01\% \\[1ex] 
 \hline
\end{tabular}
\caption{Sample Values Related to $|\mathcal{AC}^s(\Pos_0)|$ (and therefore $\mathcal{MAC}^s(n)$)}
\label{table:1}
\end{table}
\newpage

\bibliographystyle{amsalpha}
\bibliography{gmuETD}
\end{document}